\newcommand{\Mb}[1]{\left[{#1}\right]}
\newcommand{\Ma}[1]{\left\langle{#1}\right\rangle}
\newcommand{\vF}{\bm{F}}
\newcommand{\vx}{\bm{x}}
\newcommand{\vk}{\bm{k}}
\newcommand{\vv}{\bm{v}}
\newcommand{\vz}{\bm{z}}
\newcommand{\vu}{\bm{u}}
\newcommand{\va}{\bm{a}}
\newcommand{\vf}{\bm{f}}
\newcommand{\vg}{\bm{g}}
\newcommand{\vs}{\bm{s}}
\newcommand{\vn}{\bm{n}}
\newcommand{\vr}{\bm{r}}
\newcommand{\vzero}{\bm{0}}
\newcommand{\vone}{\bm{1}}
\newcommand{\veps}{\bm{\varepsilon}}
\newcommand{\valpha}{\bm{\alpha}}
\newcommand{\vbeta}{\bm{\beta}}
\newcommand{\mA}{\mathsf{A}}
\newcommand{\mM}{\mathsf{M}}
\newcommand{\mQ}{\mathsf{Q}}
\newcommand{\mK}{\mathsf{K}}
\newcommand{\mI}{\mathsf{I}}
\newcommand{\mP}{\mathsf{P}}
\newcommand{\mU}{\mathsf{U}}
\newcommand{\mV}{\mathsf{V}}
\newcommand{\mLambda}{\mathsf{\Lambda}}
\newcommand{\real}{\mathbb{R}}
\newcommand{\complex}{\mathbb{C}}
\newcommand{\znat}{\mathbb{Z}}
\newtheorem{theorem}{Theorem}[section]
\newtheorem{lemma}{Lemma}[section]
\theoremstyle{definition}
\newtheorem{remark}{Remark}[section]
\newtheorem{example}{Example}[section]
\newtheorem{definition}{Definition}[section]
\DeclareMathOperator{\linspan}{span}
\DeclareMathOperator{\diag}{diag}
\DeclareMathOperator{\range}{range}
\begin{document}

\title{Periodic particle arrangements using
standing acoustic waves}

\author{Fernando Guevara Vasquez$^{1}$ and China~Mauck$^{1}$}

\address{$^{1}$Mathematics Department, University of Utah, Salt Lake City UT
84112, USA}

\thanks{FGV and CM contributed to both the theoretical results and
the numerical experiments. The authors acknowledge support from Army Research Office
Contract No. W911NF-16-1-0457.}

\subjclass[2010]{
 35J05,
 74J05,
 82D25 
}

\keywords{Acoustic Radiation Potential, Crystallographic symmetries,
Bravais lattices, Ultrasound Directed Self-Assembly}


\begin{abstract}
We determine crystal-like materials that can be fabricated by
using a standing acoustic wave to arrange small particles in a non-viscous
liquid resin, which is cured afterwards to keep the particles in the desired
locations. For identical spherical particles with the same physical
properties and small compared to the wavelength, the locations  where
the particles are trapped correspond to the minima of an acoustic
radiation potential which describes the net forces that a particle is
subject to. We show that the global minima of spatially periodic
acoustic radiation potentials can be predicted by the eigenspace of a
small real symmetric matrix corresponding to its smallest eigenvalue. We
relate symmetries of this eigenspace to particle arrangements
composed of points, lines or planes. Since waves are used to generate
the particle arrangements, the arrangement's periodicity is limited to certain
Bravais lattice classes that we enumerate in two and three dimensions. 
\end{abstract}

\maketitle

\section{Introduction}
\label{sec:intro}

We are interested in characterizing the possible periodic or
crystal-like materials that can be fabricated by using ultrasound
directed self-assembly \cite{Greenhall:2015:UDS,Prisbrey:2017:UDS}. In
this fabrication method, a liquid resin containing small particles is
placed in a reservoir that is lined by ultrasound transducers. By
operating the transducers at a fixed frequency, a standing acoustic wave
is generated in the liquid and drives the particles to certain
locations.  For example, when the particles are neutrally buoyant and
less compressible than the surrounding fluid, the particles tend to go
to the wave nodes (zero amplitude locations), as we explain later.  Once
the particles are in the desired positions, the resin is cured (i.e.
hardened with light, a curing agent, etc.) and we obtain a material with
inclusions placed in a periodic fashion.  

To give a rough idea, one could use this technique to fabricate a
crystal-like material (essentially a grating) that selectively reflects
millimetre waves, i.e. electromagnetic waves with wavelengths of the
order of $1$mm to $1$cm. Indeed, one could achieve this by placing
sub-wavelength metallic particles periodically inside a dielectric resin
matrix. If we assume the speed of sound in the resin matrix is
$1500$ms$^{-1}$, we can expect the particles to be separated by between
$5\mu$m and $50\mu$m provided the transducer operating frequency is
between $150$kHz and $1.5$MHz. This rough estimate is based on a spacing
of half an ultrasound wavelength that we rigorously justify later. A
simple motivation, for now, is to observe that in one dimension there
are actually two nodes of the wave per wavelength. We emphasize though
that our analysis is valid in the case where the particles do not
cluster about the wave nodes and holds in two or three dimensions. We
note that acoustically configurable crystals were already considered in
\cite{Caleap:2014:ATC,Silva:2019:PPU}. Our work shows what are the
theoretical limits of such approaches and gives explicit manipulation
strategies.  We also mention that acoustic manipulation of multiple particles has
other applications to tissue engineering \cite{Armstrong:2018:EAM},
fabrication of laminates \cite{Greenhall:2017:PME} and as acoustic
tweezers \cite{Marzo:2019:HAT}.

For our study we assume that acoustic waves at a fixed frequency $f$
propagate in a fluid. The pressure and fluid velocity
fields have the form $\widetilde{p}(\vx,t) = \Re(\exp[-i\omega t]
p(\vx))$ and $\widetilde{\vv}(\vx,t) = \Re(\exp[-i\omega t] \vv(\vx))$
at position $\vx \in \real^d$ ($d=2$ or $3$) and time $t$. Here $\omega = 2\pi f$ is the angular frequency and $\Re$
denotes the real part of a complex quantity. The time domain pressure
and fluid velocity are related by 
\begin{equation} 
\begin{aligned}
 \widetilde{p}_t + \kappa_0 \nabla \cdot \widetilde{\vv} &= 0,~
 \text{and}\\
 \rho_0 \widetilde{\vv}_t + \nabla \widetilde{p} &=0,
\end{aligned}
\label{eq:weq1}
\end{equation}
where $\kappa_0$ is the compressibility of the fluid and $\rho_0$ its
mass density. Clearly the frequency domain pressure $p$ is a complex
valued solution to the Helmholtz equation $\Delta p + k^2 p = 0$, where
the wavenumber is $k = \omega / c$, and $c = \sqrt{\kappa_0/c_0}$ is the
velocity of propagation of acoustic waves in the fluid (see e.g.
\cite{Colton:1998:IAE}). Notice that the pressure and velocities in the
frequency domain are related by $\vv = (i\omega\rho_0)^{-1} \nabla p$.

\subsection{The acoustic radiation potential}
\label{sec:arp:def}
Small (compared to the wavelength) particles in a fluid are subject to
an acoustic radiation force
\cite{Gorkov:1962:FAS,King:1934:ARP,Settnes:2012:FAS}. It is
convenient to study the average of this force over a time period $T =
1/f$. To be more precise, the 
time average over a period of $T-$periodic function $g$ is
\[
 \langle g \rangle = \frac{1}{T}\int_0^T g(t) dt.
\]
The net acoustic radiation force experienced by a small particle and averaged
over a time period is proportional to $\vF = - \nabla \psi$, where
$\psi(\vx)$ is the so-called acoustic radiation potential. For small spherical
particles of fixed size, the acoustic radiation potential is given
by\footnote{We denote by $|\vv|$ the Euclidean norm of a vector $\vv$.}
\begin{equation}
 \label{eq:arp1}
  \psi = \mathfrak{f}_1 \frac{\kappa_0}{2} \Ma{|\widetilde{p}|^2} -
  \mathfrak{f}_2 \frac{3 \rho_0}{4}
 \Ma{|\widetilde{\vv}|^2},
\end{equation}
where $\mathfrak{f}_1 = 1 - (\kappa_p/\kappa_0)$ and $\mathfrak{f}_2 =
2(\rho_p - \rho_0)/(2\rho_p + \rho_0)$ are non-dimensional constants
depending on the compressibilities  of the particle and the fluid
($\kappa_p$ and $\kappa_0$, respectively) and on their mass densities
($\rho_p$ and $\rho_0$, respectively). 

Clearly, particles tend to {\em cluster at the minima of the potential
$\psi$}.  The acoustic radiation potential can be rewritten in terms of
the frequency domain pressure as follows\footnote{Recall that if
$\widetilde{\vf}(t) = \Re (\vf \exp[-i\omega t])$, then its time average
is $\langle |\widetilde{\vf}(t)|^2 \rangle = |\vf|^2/2$.}
\begin{equation}
  \label{eq:arp}
  \psi = \mathfrak{a} |p|^2 - \mathfrak{b} |\nabla p|^2 =
  \begin{bmatrix} p \\ \nabla p \end{bmatrix}^*
  \begin{bmatrix}
   \mathfrak{a} & \\ & -\mathfrak{b} \mI_d
  \end{bmatrix}
  \begin{bmatrix} p \\ \nabla p \end{bmatrix},
\end{equation}
where $\mathfrak{a} = \mathfrak{f}_1 \kappa_0/4$,  $\mathfrak{b}=\mathfrak{f}_2 3 / (8\rho_0
\omega^2)$, $\mI_d$ is the $d\times d$ identity matrix and $*$ denotes the
conjugate transpose. The key observation here is that
for fixed $\vx$, we can think of the acoustic radiation potential as a
quadratic form in $p$ and $\nabla p$. We make no
particular assumption on the signs of $\mathfrak{a}$ and $\mathfrak{b}$, as they depend on
the physical properties of the particles and the fluid.

\begin{remark}
In general the acoustic radiation potential depends also on the size and
shape of the particles, thus by using \eqref{eq:arp1} to predict where
the particles cluster, we are neglecting the effect of the particle's size
and shape.
\end{remark}

\subsection{Spatially periodic acoustic waves}
\label{sec:paw}
To get a periodic arrangement of particles we take the pressure field
$p$ to be periodic. This can be achieved by taking a superposition of
plane waves with wavevectors $\vk_1,\ldots,\vk_d$ in dimension $d$,
that is
\begin{equation}
 p(\vx;\vu) = \sum\limits_{j=1}^d \alpha_j\exp[i\vk_j\cdot
 \vx] + \beta_j\exp[-i\vk_j\cdot \vx],
 \label{eq:amplitudes}
\end{equation}
where $\vu = [\alpha_1,\ldots,\alpha_d,\beta_1,\ldots,\beta_d]^T$ are
complex amplitudes, the wavenumber is $k = |\vk_j| = 2\pi/\ell$ and the
wavelength is $\ell = c/f$. If the wavevectors $\vk_j$ form a basis of
$\real^d$, which is assumed hereinafter, we can define the {\em lattice}
\cite{Kittel:2005:ISP} vectors $\va_1,\cdots,\va_d$
by introducing the $d\times d$ real matrices $\mA \equiv
[\va_1,\cdots,\va_d]$ and $\mK \equiv [\vk_1, \cdots , \vk_d]$ which are
related by
\begin{equation}
\label{eq:dual}
\mA = 2\pi \mK^{-T}.
\end{equation}
The field $p$ is $\mA-$periodic since 
\begin{equation}
 p( \vx + \mA \vn; \vu) = p(\vx; \vu)~\text{for any}~\vx \in
 \real^d,~\vn \in \znat^d,~\text{and}~\vu \in \complex^{2d}.
\end{equation}
Experimentally we expect that an acoustic pressure field similar to the
real part of \eqref{eq:amplitudes} can be obtained far away from planar
ultrasound transducers with normal orientations $\vk_j$ by using the 
complex amplitudes $\vu$ to determine the amplitudes and phases of the
voltage driving the transducers (see \cref{fig:setup} for an
illustration). Because of this we call $\vu$ {\em transducer
parameters}. We do not include the relation between $\vu$ and an actual
transducer's operating voltages as it is out of the scope of the present
study.

\begin{figure}
 \begin{center}
  \includegraphics[width=0.3\textwidth]{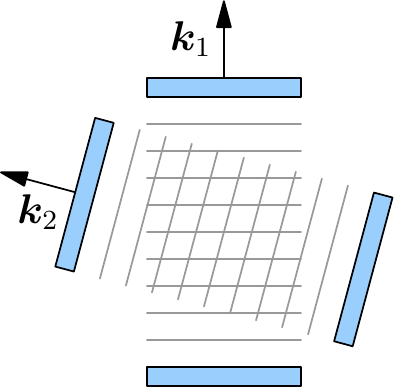}
 \end{center}
 \caption{A possible arrangement of ultrasound transducers (in blue) to
 generate fields close to \eqref{eq:amplitudes} in two dimensions.}
 \label{fig:setup}
\end{figure}

Since $p$ is $\mA-$periodic, its associated acoustic
radiation potential $\psi$ must also be $\mA-$periodic. In \cref{sec:arp}
we show that the extrema of the acoustic radiation potential can be
predicted by the maximum and minimum eigenvalues of a $2d \times 2d$
real symmetric matrix. We show that the level-sets of the acoustic
radiation potential at values equal to the eigenvalues of this matrix
are determined by the associated eigenspace.  Note that the relation
between the acoustic radiation potential and an eigendecomposition was
already exploited in \cite{Greenhall:2015:UDS, Prisbrey:2017:UDS} to
minimize the acoustic radiation potential at a set of points. Here we
are in a especial case where we
can find the minimizers {\em explicitly} and we give sufficient
conditions for these to be isolated points, lines or planes arranged
$\mA-$periodically. The lattice vectors of the possible particle
patterns must have reciprocal vectors with the same length. This limits the
possible classes of crystallographic symmetries (Bravais lattices) that can be
achieved with this method. We explore this limitation in two and three
dimensions in \cref{sec:lims}. We summarize our findings and questions
that were left open in \cref{sec:summary}.

\section{Study of the acoustic radiation potential}
\label{sec:arp}

The key to our study is to write the acoustic radiation potential as a
quadratic form of the amplitudes $\vu$ driving the plane waves
(\cref{sec:quad}). We observe that spatial shifts are equivalent to a
similarity transformation of the matrix defining the quadratic form.
Therefore it suffices to study the acoustic radiation potential at the
origin (\cref{sec:eigen}), where we can write the eigendecomposition of the
associated matrix explicitly. Its eigenvalues give simple bounds on the
acoustic radiation potential at constant power. In particular the global
minimum values must correspond to the smallest eigenvalue of the
associated matrix (\cref{sec:bounds}). Then in \cref{sec:values} we
study the possible level-sets of the acoustic radiation potential (at
constant power) that are equal to one of the eigenvalues of the
associated matrix. These particular level-sets may be composed of lattices (with
anywhere between $2$ and $2^d$ points per primitive cell), lines or
planes, as is summarized in \cref{thm:summary}.

\subsection{The acoustic radiation potential as a quadratic form}
\label{sec:quad}
For a superposition of plane waves of the form \eqref{eq:amplitudes}, the acoustic radiation
potential at a point $\vx$ can be written as
\begin{equation}
  \label{eq:arpquad}
  \psi(\vx;\vu) = \vu^* \mQ(\vx) \vu,
\end{equation}
where the $2d\times 2d$ Hermitian matrix $\mQ(\vx)$ is defined by
\begin{equation}
  \label{eq:Q}
  \mQ(\vx) = \mM(\vx)^* \begin{bmatrix} \mathfrak{a} &\\ &
  -\mathfrak{b}\mI_d \end{bmatrix} \mM(\vx),
\end{equation}
and we have used the $(d+1) \times 2d$ complex matrix $\mM(\vx)$ that is
given by
\begin{equation}
  \label{eq:M}
  \mM(\vx) = [\mM_+(\vx)\, \mM_-(\vx) ]
  ~\text{and}~
  \mM_{\pm}(\vx) =
  \begin{bmatrix}
   \exp[\pm i \vx^T \mK]\\ \mK \diag(\pm i\exp[\pm i \vx^T \mK])
   \end{bmatrix},
\end{equation}
where the exponential of a vector is understood componentwise and
$\diag([a_1,\ldots,a_n])$ is the matrix with diagonal elements
$a_1,\ldots,a_n$.

We remark that a translation in $\vx$ is equivalent to a {\em unitary}
similarity transformation of $\mQ(\vx)$. To see this consider a
point $\vx \in \real^d$, and write $\vx = \vx_0 + \veps$. Notice
that
\begin{equation}
  \label{eq:space-phase}
  \psi(\vx_0+\veps;\vu) =
  \psi(\vx_0; \exp[ i [\mK, -\mK]^T \veps ] \odot \vu),
\end{equation}
where $\odot$ is the componentwise or Hadamard product of two vectors.
Indeed, write\footnote{We use semicolons to stack vectors
i.e. $[\valpha;\vbeta] \equiv [\valpha^T,\vbeta^T]^T$, for vectors
$\valpha,\vbeta$.} $\vu =
[\valpha;\vbeta]$, $\valpha,\vbeta \in \complex^d$, and observe that a
spatial shift $\veps$ is equivalent to changing the phase of $\alpha_j$
by $\vk_j\cdot\veps$ and of $\beta_j$ by $-\vk_j\cdot\veps$. Therefore
$\mQ(\vx_0+\veps)$ and $\mQ(\vx_0)$ are related by the similarity
transformation
\begin{equation}
  \label{eq:similarity}
  \mQ(\vx_0+\veps) =
   \diag(\exp[ i [\mK, -\mK]^T \veps ])^*
   \mQ(\vx_0)
   \diag(\exp[ i [\mK, -\mK]^T \veps ]).
\end{equation}
A practical consequence of \eqref{eq:similarity} is that we can study
the acoustic radiation potential at a particular point $\vx_0$ and use
\eqref{eq:space-phase} or \eqref{eq:similarity} to deduce its behaviour
everywhere else. For simplicity we take $\vx_0 = \vzero$.

\subsection{The acoustic radiation potential at the origin}
\label{sec:eigen}
After a straightforward calculation we get that
\begin{equation}
\label{eq:Q0}
 \mQ(\vzero) = \mathfrak{a} \vone\vone^T - \mathfrak{b} \begin{bmatrix}\mK\\-\mK\end{bmatrix}
 \begin{bmatrix}\mK\\-\mK\end{bmatrix}^T,
\end{equation}
where $\vone$ is a vector of all ones of appropriate dimension. The
following \cref{lem:Q0} gives the eigendecomposition of $\mQ(\vzero)$
explicitly in terms of  that of $\mK\mK^T$. 
\begin{lemma}
 \label{lem:Q0}
 Let $\sigma_1 \geq \sigma_2 \geq \ldots \geq \sigma_d > 0$ be the
 eigenvalues of $\mK\mK^T$ sorted in decreasing order and including
 multiplicity, and let
 $\{\vu_1,\ldots,\vu_d\}$ be a corresponding real orthonormal
 basis of eigenvectors\footnote{Alternatively, the $\sigma_j$ are the squares of the singular values of
 $\mK$ and the $\vu_j$ are its right singular vectors.}. Let
 $\{\vz_1,\ldots,\vz_{d-1}\}$ be a real orthonormal
 basis for $\vone^\perp$. Then 
 $\mQ(\vzero)$ admits the eigendecomposition $\mQ(\vzero) = \mU^T
 \mLambda \mU$, where $\mLambda$ is the $2d\times 2d$ diagonal matrix
 \begin{equation}
  \label{eq:Lambda}
  \mLambda = \diag([ 2\mathfrak{a}d, 0,\ldots,0,
  -2\mathfrak{b}\sigma_1,\ldots,-2\mathfrak{b}\sigma_d]),
 \end{equation}
 and $\mU$ is the real $2d \times 2d$ orthonormal matrix
 \begin{equation}
  \label{eq:U}
  \mU = \frac{1}{\sqrt{2}}
  \begin{bmatrix}
   \vone/\sqrt{d} & \vz_1 & \ldots & \vz_{d-1} & \vu_1 & \ldots & \vu_d\\
   \vone/\sqrt{d} & \vz_1 & \ldots & \vz_{d-1} & -\vu_1 & \ldots & -\vu_d
  \end{bmatrix}.
 \end{equation}
\end{lemma}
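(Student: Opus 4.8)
The plan is to exploit the block structure of \eqref{eq:Q0} to split the $2d\times 2d$ problem into two $d\times d$ problems. Writing $\mathsf{B}$ for the $2d\times d$ matrix obtained by stacking $\mK$ above $-\mK$, we have $\mQ(\vzero) = \mathfrak{a}\vone\vone^T - \mathfrak{b}\,\mathsf{B}\mathsf{B}^T$. The key observation is that $\real^{2d}$ decomposes orthogonally into the ``symmetric'' subspace $\mathcal{S} = \{[\vy;\vy] : \vy\in\real^d\}$ and the ``antisymmetric'' subspace $\mathcal{T} = \{[\vy;-\vy] : \vy\in\real^d\}$, and that each summand of $\mQ(\vzero)$ acts on only one of these. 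Indeed $\mathsf{B}^T[\vy;\vy] = \mK^T\vy - \mK^T\vy = \vzero$, so $\mathsf{B}\mathsf{B}^T$ annihilates $\mathcal{S}$; and $\vone^T[\vy;-\vy] = 0$ (here $\vone$ is the $2d$-vector of ones), so $\vone\vone^T$ annihilates $\mathcal{T}$. Since $\vone\vone^T$ maps into $\mathcal{S}$ and $\mathsf{B}\mathsf{B}^T$ maps into $\mathcal{T}$, both subspaces are invariant under $\mQ(\vzero)$, and it suffices to diagonalize $\mQ(\vzero)$ on each separately.

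On $\mathcal{S}$, identifying $[\vy;\vy]$ with $\vy$, the operator $\vone\vone^T$ becomes $2\vone\vone^T$ on $\real^d$ (the factor $2$ coming from the two identical blocks), a rank-one map with eigenvalue $2d$ on $\vone$ and eigenvalue $0$ on $\vone^\perp$. Multiplying by $\mathfrak{a}$ gives the eigenvalue $2\mathfrak{a}d$ on $[\vone;\vone]$ and the $(d-1)$-fold eigenvalue $0$ on the lifts of any orthonormal basis $\{\vz_1,\ldots,\vz_{d-1}\}$ of $\vone^\perp$. On $\mathcal{T}$, identifying $[\vy;-\vy]$ with $\vy$, one computes $\mathsf{B}^T[\vy;-\vy]=2\mK^T\vy$, so $\mathsf{B}\mathsf{B}^T$ becomes $2\mK\mK^T$, whose eigenpairs are exactly $(\sigma_j,\vu_j)$; multiplying by $-\mathfrak{b}$ yields the eigenvalues $-2\mathfrak{b}\sigma_j$ on the lifts of $\vu_j$. (Here $\sigma_j>0$ because $\mK$ is invertible.) The prefactor $1/\sqrt{2}$ in \eqref{eq:U} is exactly the normalization needed, since $\|[\vy;\pm\vy]\| = \sqrt{2}\,\|\vy\|$ for a unit vector $\vy$.

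Finally, I would assemble the eigenbasis and check orthonormality. Within $\mathcal{S}$ the lifts of the orthonormal set $\{\vone/\sqrt{d},\vz_1,\ldots,\vz_{d-1}\}$ remain orthonormal, within $\mathcal{T}$ the lifts of $\{\vu_1,\ldots,\vu_d\}$ remain orthonormal, and $\mathcal{S}\perp\mathcal{T}$; hence the $2d$ vectors forming the columns of $\mU$ in \eqref{eq:U} are an orthonormal eigenbasis of $\mQ(\vzero)$ with eigenvalues appearing in the order recorded by $\mLambda$ in \eqref{eq:Lambda}, which gives the asserted orthogonal diagonalization. The only real content is the invariance of $\mathcal{S}$ and $\mathcal{T}$ together with the resulting cancellation of the cross terms; once that is in hand the remaining verifications are routine bookkeeping, so I do not anticipate a genuine obstacle beyond keeping the two $d$-dimensional reductions and their scalar factors ($2\mathfrak{a}d$ versus $-2\mathfrak{b}\sigma_j$) straight.
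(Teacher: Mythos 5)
Your proof is correct and takes essentially the same approach as the paper: your subspaces $\mathcal{S}$ and $\mathcal{T}$ are exactly the paper's $H_+$ and $H_-$, and your invariant-subspace/annihilation argument is just the coordinate-free form of the paper's orthogonal conjugation $[\mV_+\, \mV_-]^T \mQ(\vzero) [\mV_+\, \mV_-] = \diag\M{2\mathfrak{a}\vone\vone^T,\, -2\mathfrak{b}\mK\mK^T}$. After that block-diagonalization, both proofs finish identically by eigendecomposing the two $d\times d$ blocks and lifting the eigenvectors with the $1/\sqrt{2}$ normalization.
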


Before proceeding to the proof of \cref{lem:Q0}, it is useful to
introduce the decomposition $\complex^{2d} = H_+ \oplus H_-$ where
$H_\pm = \{ [\valpha;\pm\valpha] ~|~ \valpha \in \complex^d \}$. The
unitary matrices
\begin{equation}
 \mV_\pm = \frac{1}{\sqrt{2}}\begin{bmatrix} \mI_d \\ \pm \mI_d \end{bmatrix}
\end{equation}
are such that $H_\pm = \range(\mV_\pm)$. Orthogonal 
projectors onto $H_\pm$ are given by $\mP_\pm = \mV_\pm \mV_\pm^T$.

\begin{proof}
From our assumption that the wavevectors $\vk_1,\ldots,\vk_d$ form a
basis, it is clear that the smallest eigenvalue of $\mK \mK^T$ should be
positive. Now notice that we have
\begin{equation}
\begin{bmatrix}
 \mV_+ \mV_-
\end{bmatrix}^T
 \mQ(\vzero)
\begin{bmatrix}
 \mV_+ \mV_-
\end{bmatrix} 
=
\begin{bmatrix}
2\mathfrak{a}\vone \vone^T &\\
&-2\mathfrak{b}\mK\mK^T
\end{bmatrix}.
\end{equation}
Using the eigendecompositions of $\vone\vone^T$ and $\mK\mK^T$ readily
gives that of $\mQ(\vzero)$.
\end{proof}

\subsection{Bounds on the acoustic radiation potential}
\label{sec:bounds}
A simple consequence of the acoustic radiation potential being
a quadratic form \eqref{eq:arpquad} is that for any transducer
parameters $\vu \in \complex^{2d}$ and positions $\vx \in \real^d$, we
have the bounds
\begin{equation}
 \lambda_{\min}(\vx) |\vu|^2 \leq \psi(\vx;\vu) \leq
 \lambda_{\max}(\vx) |\vu|^2,
\end{equation}
where $\lambda_{\max,\min}(\vx) = \lambda_{\max,\min}(\mQ(\vx))$ are the
minimum and maximum eigenvalues of $\mQ(\vx)$.
However we observed in \eqref{eq:similarity} that for arbitrary $\vx$,
$\mQ(\vx)$ is unitarily similar to $\mQ(\vzero)$, thus the eigenvalues
of $\mQ(\vx)$ do not depend on $\vx$. This immediately gives
the bound
\begin{equation}
 \lambda_{\min}(\vzero) |\vu|^2 \leq \psi(\vx;\vu) \leq
 \lambda_{\max}(\vzero) |\vu|^2,
 \label{eq:bound}
\end{equation}
for any $\vx\in \real^d$ and $\vu \in \complex^{2d}$. From this bound it
is clear that to achieve the smallest possible acoustic radiation
potential at a particular position $\vx_0$ we simply need to choose
transducer parameters $\vu_{\min}$ within the eigenspace of $\mQ(\vx_0)$
corresponding to $\lambda_{\min}(\vzero) = \lambda_{\min}(\vx_0)$. Hence $\vu_{\min}$ gives an explicit
solution to the minimization
\begin{equation}
 \min_{|\vu|^2 = 1} \psi(\vx_0;\vu),
 \label{eq:min}
\end{equation}
as was remarked in \cite{Greenhall:2015:UDS, Prisbrey:2017:UDS}. Since
the power to generate the plane waves \eqref{eq:amplitudes} is
proportional to $|\vu|^2$, the constraint in \eqref{eq:min} means that
we look only for transducer parameters that require the same power.
The bound \eqref{eq:bound} guarantees that with transducer parameters
$\vu_{\min}$, $\vx_0$ is a global minimum of $\psi(\vx;\vu_{\min})$, as
a function of $\vx$. Notice that this choice does not rule out the
existence of local minima of $\psi(\vx;\vu_{\min})$, where the particles
could also be trapped. Also because $\psi(\vx;\vu_{\min})$ is
$\mA-$periodic in $\vx$, we immediately get that $\psi(\vx;\vu_{\min})$
has global minima at points $\vx$ in the lattice
\begin{equation}
 \{ \vx_0 + \mA \vn ~|~ \vn \in \znat^d \}.
\end{equation}
A natural question is whether the global minima are limited to this
lattice.  This is answered negatively in the next section.

\subsection{Level-sets of the acoustic radiation potential}
\label{sec:values}\
The set of all positions $\vx$ for which the acoustic radiation
potential has the same value $\gamma$ is
\begin{equation}
  L_{\gamma,\vu} = \{\vx \in \real^d ~|~ \psi(\vx;\vu) =
  \gamma\},
  \label{eq:llu}
\end{equation}
for particular transducer parameters $\vu$. To simplify the
exposition we restrict ourselves to the case $|\vu| = 1$, i.e. constant
power. From the bound \eqref{eq:bound} we see  that taking $\vu$ to be
an eigenvector of $\mQ(\vzero)$ associated with
$\lambda_{\min}(\mQ(\vzero))$ guarantees that the acoustic radiation
potential has a global minimum at the origin. In fact 
the level-sets associated with any eigenpair of
$\mQ(\vzero)$ are determined by the associated eigenspace.
\begin{lemma}
\label{lem:rayleigh}
 Let $\lambda,\vu$ be an eigenpair of $\mQ(\vzero)$ with $|\vu|=1$. Then 
 \[
  L_{\lambda,\vu} = \{ \vx \in \real^d ~|~ \exp[i [\mK,-\mK]^T \vx]
  \odot \vu \in \lambda-\text{eigenspace of}~\mQ(\vzero) \}.
 \]
\end{lemma}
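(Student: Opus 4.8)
The plan is to recognize $\psi(\vx;\vu)$ as a Rayleigh quotient of the single fixed matrix $\mQ(\vzero)$, so that the level set is read off from the variational characterization of its eigenvalues. First I would invoke the space--phase identity \eqref{eq:space-phase} (equivalently the similarity \eqref{eq:similarity} with $\vx_0=\vzero$) to write
\[
 \psi(\vx;\vu) = \vw^*\mQ(\vzero)\vw, \qquad \vw := \exp[i[\mK,-\mK]^T\vx]\odot\vu .
\]
Because $\mD(\vx):=\diag(\exp[i[\mK,-\mK]^T\vx])$ has unit-modulus diagonal entries it is unitary, so $\vw=\mD(\vx)\vu$ obeys $|\vw|=|\vu|=1$. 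Hence $\vx\in L_{\lambda,\vu}$ exactly when the Rayleigh quotient of $\mQ(\vzero)$ at the unit vector $\vw$ equals $\lambda$, and the entire statement reduces to identifying the unit vectors $\vw$ on the orbit $\{\mD(\vx)\vu : \vx\in\real^d\}$ at which this quotient attains the eigenvalue $\lambda$.

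The inclusion $\supseteq$ is immediate: if $\vw$ lies in the $\lambda$-eigenspace of $\mQ(\vzero)$, then $\mQ(\vzero)\vw=\lambda\vw$, whence $\psi(\vx;\vu)=\vw^*(\lambda\vw)=\lambda|\vw|^2=\lambda$, so $\vx\in L_{\lambda,\vu}$. This direction needs nothing beyond the reduction above.

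For the reverse inclusion I would expand $\vw$ in the orthonormal eigenbasis furnished by \cref{lem:Q0}. Writing $\mP_\mu$ for the orthogonal projector onto the $\mu$-eigenspace and letting $\mu$ range over the distinct eigenvalues, one gets
\[
 \psi(\vx;\vu)=\vw^*\mQ(\vzero)\vw=\sum_{\mu}\mu\,|\mP_\mu\vw|^2, \qquad \sum_{\mu}|\mP_\mu\vw|^2=|\vw|^2=1,
\]
so $\psi(\vx;\vu)$ is a convex combination of the eigenvalues with weights $|\mP_\mu\vw|^2\ge 0$. When $\lambda$ is an extreme eigenvalue---in particular $\lambda=\lambda_{\min}(\mQ(\vzero))$, which by \eqref{eq:bound} is the value governing the global minima where the particles cluster---the equality $\sum_\mu \mu\,|\mP_\mu\vw|^2=\lambda_{\min}(\mQ(\vzero))$ together with $\sum_\mu|\mP_\mu\vw|^2=1$ and $\mu\ge\lambda_{\min}(\mQ(\vzero))$ forces $|\mP_\mu\vw|^2=0$ for every $\mu$ strictly larger than $\lambda_{\min}(\mQ(\vzero))$. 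Thus $\vw$ lies in the $\lambda_{\min}$-eigenspace, which is precisely the condition defining the right-hand set; applying the same argument to $-\mQ(\vzero)$ disposes of $\lambda=\lambda_{\max}$.

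I expect the reverse inclusion to be the genuine obstacle. The convexity argument pins $\vw$ to the eigenspace only because all remaining eigenvalues sit strictly on one side of $\lambda$; for an interior eigenvalue the relation $\sum_\mu\mu\,|\mP_\mu\vw|^2=\lambda$ is a single linear constraint on the weights and does not by itself confine $\vw$ to the $\lambda$-eigenspace, so the equality half of the Rayleigh--Ritz bound must be handled with care and really uses that the level in question corresponds to a global extremum of $\psi$. Everything else is bookkeeping once the explicit eigendecomposition of \cref{lem:Q0} is substituted.
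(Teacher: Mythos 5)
Your reduction of $\psi(\vx;\vu)$ to the Rayleigh quotient of $\mQ(\vzero)$ at the unit vector $\vw=\exp[i[\mK,-\mK]^T\vx]\odot\vu$ (via \eqref{eq:space-phase}) and your proof of the inclusion $\supseteq$ are exactly the paper's argument. For the reverse inclusion the paper invokes, with no restriction on $\lambda$, the claim that ``the Rayleigh quotient of a Hermitian matrix is equal to an eigenvalue if and only if it is evaluated at one of the corresponding eigenvectors.'' Your convexity argument is the correct proof of that claim for extreme eigenvalues, and the obstacle you flag for interior eigenvalues is not a defect of your write-up: the claim is simply false there (for $\diag(1,0,-1)$ the unit vector $(1,0,1)/\sqrt{2}$ has Rayleigh quotient equal to the eigenvalue $0$, yet is not an eigenvector), and the lemma itself fails with it.

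A counterexample sits inside the paper's own \cref{ex:em2}: $d=2$, $\mK=\mI_2$, $\mathfrak{a}=\mathfrak{b}=1$, so $\mQ(\vzero)$ has eigenvalues $4,0,-2,-2$ and $\lambda=0$ is interior. Take the eigenpair $\lambda=0$, $\vu=\tfrac{1}{2}[-1,1,-1,1]^T$ (the second column of $\mU$ there). Then $p(\vx;\vu)=\cos x_2-\cos x_1$ and
\[
\psi(\vx;\vu)=(\cos x_2-\cos x_1)^2-\sin^2 x_1-\sin^2 x_2 .
\]
At $\vx_*=(0,\pi/2)^T$ this gives $\psi(\vx_*;\vu)=1-1=0=\lambda$, so $\vx_*\in L_{0,\vu}$; but $\vw_*=\exp[i[\mK,-\mK]^T\vx_*]\odot\vu=\tfrac{1}{2}[-1,\,i,\,-1,\,-i]^T$ satisfies $\mQ(\vzero)\vw_*=\tfrac{1}{2}[-2,\,-2-2i,\,-2,\,-2+2i]^T\neq\vzero$, so $\vw_*$ does not lie in the $0$-eigenspace. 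Hence the right-hand set in the lemma is a \emph{proper} subset of $L_{0,\vu}$. The upshot is that your proof establishes the lemma exactly in the cases where it is true, $\lambda\in\{\lambda_{\min},\lambda_{\max}\}$ (which are also the only cases the paper's examples and the particle-trapping application actually use), while the interior case you declined to claim cannot be closed at all: the statement should be restricted to extreme eigenvalues, or the equality weakened to the inclusion $L_{\lambda,\vu}\supseteq\{\vx\in\real^d~|~\exp[i[\mK,-\mK]^T\vx]\odot\vu\in\lambda\text{-eigenspace of }\mQ(\vzero)\}$. The same correction propagates to the exactness claim of \cref{lem:multiple} (whose ``only if'' direction uses this lemma) and to the remark preceding \cref{thm:summary} that the results apply to the level-sets of all eigenvalues of $\mQ(\vzero)$.
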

\begin{proof}
Let $\vx$ such that $\exp[i[\mK,-\mK]^T\vx] \odot \vu \in \lambda-$eigenspace of
$\mQ(\vzero)$. Then from \eqref{eq:space-phase} and \eqref{eq:arpquad}, we see that
$\psi(\vx;\vu) = \psi(\vzero;\exp [i [\mK,-\mK]^T\vx] \odot \vu) = \lambda$. On the
other hand if $\vx \in L_{\lambda,\vu}$ then $\exp[i[\mK,-\mK]^T\vx] \odot \vu$
must  be a $\lambda-$eigenvector of $\mQ(\vzero)$. This is because
the Rayleigh quotient of a Hermitian matrix is equal to an eigenvalue if
and only if it is evaluated at one of the corresponding eigenvectors.
\end{proof}
Notice that by \cref{lem:Q0}, we may always be able to pick a
$\lambda-$eigenvector of $\mQ(\vzero)$ that is of the form $\vu = [\vv;
\pm\vv]$, where $|\vv| = 1/\sqrt{2}$. We now give conditions on the
symmetries of the $\lambda-$eigenspace that allow us to decide whether the
periodic patterns consist of points, lines or planes. The conditions
boil down to checking whether the $\lambda-$eigenvector $\vu$ remains a
$\lambda-$eigenvector after certain sign changes. The results are
summarized in the following theorem. We emphasize that the results in
\cref{thm:summary} do not only apply to the global minimum level-sets of
the acoustic radiation potential but also to the level-sets
corresponding to the other eigenvalues of $\mQ(\vzero)$. Nevertheless
our results do not say anything about local minima different from the
global ones.
\begin{theorem}
\label{thm:summary}
 Let $\lambda,\vu$ be an eigenpair of $\mQ(\vzero)$ with $\vu$ real and $|\vu|=1$. Then
 the acoustic radiation potential level-set $L_{\lambda,\vu}$ satisfies the following.
 \begin{itemize}
  \item If any of the entries of $\vu$ is zero, then the $\lambda-$levelset of the
  acoustic radiation potential contains lines or even planes
  (\cref{lem:indet}). This corresponds to turning off one or more of the
  transducers.

  \item If $\vu$ has no zero
  entries and the $\lambda-$eigenspace of $\mQ(\vzero)$ is all contained
  within either $H_+$ or $H_-$ then the $\lambda-$levelset of the acoustic
  radiation potential is composed of between 2 and $2^d$ isolated 
  points per primitive cell (\cref{lem:multiple}). The precise number of points
  is equal to the number of sign changes of $\vu$ for which the
  resulting vectors remain in the $\lambda-$eigenspace.

  \item If the $\lambda-$eigenspace of $\mQ(\vzero)$ straddles $H_+$ and
  $H_-$ then, the $\lambda-$levelset of the acoustic radiation potential
  may contain lines (\cref{lem:lines}) or even planes
  (\cref{lem:planes}).

 \end{itemize}
\end{theorem}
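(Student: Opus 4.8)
The plan is to prove the three cases of \cref{thm:summary} by reducing each to an analysis of when the phase-shifted vector $\exp[i[\mK,-\mK]^T\vx] \odot \vu$ remains in the $\lambda$-eigenspace of $\mQ(\vzero)$, exactly as characterized by \cref{lem:rayleigh}. The key observation is that multiplication by the diagonal phase matrix $\diag(\exp[i[\mK,-\mK]^T\vx])$ acts on each entry of $\vu$ by a unimodular scalar, so the condition $\vx \in L_{\lambda,\vu}$ becomes a system of constraints on the phases $\vk_j\cdot\vx$ coming from the requirement that the phased vector lie in a fixed linear subspace. I would carry out each bullet as a separate lemma (the statement already announces \cref{lem:indet}, \cref{lem:multiple}, \cref{lem:lines}, \cref{lem:planes}), so the proof of the theorem itself is essentially a dispatch to these.

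\textbf{First case (zero entries).} If some entry of $\vu$ vanishes, then the corresponding phase factor $\exp[\pm i\vk_j\cdot\vx]$ is multiplied against a zero and drops out of the membership condition entirely. I would show that the constraint $\exp[i[\mK,-\mK]^T\vx]\odot\vu \in \lambda$-eigenspace then fails to pin down $\vk_j\cdot\vx$, leaving at least one free real parameter; since the $\vk_j$ form a basis, a free phase $\vk_j\cdot\vx = t$ traces out an affine hyperplane (a line when $d=2$, a plane when $d=3$) in $\vx$-space. The geometric heart is the duality \eqref{eq:dual}: fixing all but one of the coordinates $\vk_i\cdot\vx$ and letting one vary sweeps out a translate of the span of the corresponding lattice vector $\va_j$, i.e. a line, and two free phases give a plane.

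\textbf{Second case (no zeros, eigenspace inside $H_+$ or $H_-$).} Here I would use the explicit eigendecomposition from \cref{lem:Q0}: within $H_\pm$, membership in the $\lambda$-eigenspace is governed by the eigenvectors $\vu_1,\ldots,\vu_d$ of $\mK\mK^T$, and the phase action restricted to $H_\pm$ collapses (because on $H_+$ the top and bottom blocks carry opposite sign phases that combine). Writing $\vu = [\vv;\pm\vv]$, the condition becomes that the phased $\vv$ lie in a fixed eigenspace of $\mK\mK^T$; since each nonzero entry of $\vv$ forces its phase $\vk_j\cdot\vx$ into a discrete set $\{0,\pi\}$ (modulo $2\pi$) to preserve the real eigenspace, the solutions are isolated. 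Counting the admissible sign patterns that keep $\vu$ in the eigenspace yields between $2$ and $2^d$ points per primitive cell, matching the claim that the count equals the number of sign changes of $\vu$ leaving it in the $\lambda$-eigenspace.

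\textbf{Third case (straddling $H_+$ and $H_-$).} The eigenspace now contains components in both $H_+$ and $H_-$, so the phase factors on the top and bottom blocks no longer combine trivially; I would exhibit that a continuous family of phases can keep the combined vector in the eigenspace, producing lines (\cref{lem:lines}) or planes (\cref{lem:planes}). The main obstacle I expect is this third case: verifying that the straddling condition genuinely produces a positive-dimensional solution set requires carefully tracking how a simultaneous phase shift in both $H_+$ and $H_-$ directions can preserve membership, rather than merely permitting it for isolated $\vx$. This amounts to identifying, for the given eigenspace, a direction in phase-space along which the eigenspace is invariant under the $\diag(\exp[i[\mK,-\mK]^T\vx])$ action, and translating such an invariance back through \eqref{eq:dual} into an affine subspace of $\vx$. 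I would handle this by decomposing $\vu$ into its $H_+$ and $H_-$ parts via the projectors $\mP_\pm$ and analyzing the phase action on each part separately, deferring the detailed dimension count to the dedicated lemmas.
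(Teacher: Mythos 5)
Your proposal follows the paper's route exactly: the paper proves \cref{thm:summary} simply by dispatching to \cref{lem:indet,lem:multiple,lem:lines,lem:planes}, all of which rest on the \cref{lem:rayleigh} characterization of $L_{\lambda,\vu}$ as eigenspace membership of the phase-shifted vector $\exp[i[\mK,-\mK]^T\vx]\odot\vu$, which is precisely your reduction. Your case-by-case sketches also match the paper's mechanisms — free phases on the zero entries yielding $\linspan\{\va_j ~|~ j \in Z\}$ in the level set; the vanishing of the $H_\mp$ (sine) component forcing $\vk_j\cdot\vx \in \pi\znat$ and hence the sign-pattern lattices with between $2$ and $2^d$ points per primitive cell; and $\cos\theta$/$\sin\theta$ combinations straddling $H_+$ and $H_-$ producing the lines and planes.
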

The first result applies to the situation where one or more of the
transducers is off, i.e. when we pick eigenvectors $\vu$ that have zero
entries in the $\lambda-$eigenspace of $\mQ(\vzero)$. In this case, the
level-set $L_{\lambda,\vu}$ contains subspaces of the lattice vectors,
which could be either lines or planes, depending on the dimension $d$.
\begin{lemma}
 \label{lem:indet}
 Let $\vu = [\vv;\pm \vv]$ be a real unit norm eigenvector of $\mQ(\vzero)$ associated with eigenvalue
 $\lambda$. Let $Z = \{ j ~|~ v_j =0 \}$. If any entry of $\vv$ is
 zero, i.e. $Z \neq \emptyset$ then
 \begin{equation}
  \label{eq:span}
 \{ \vk_j ~|~ j \notin Z \}^\perp = \linspan\{ \va_j ~|~ j \in Z \}
 \subset L_{\lambda,\vu}.
 \end{equation}
\end{lemma}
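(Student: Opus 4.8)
The plan is to prove the two assertions separately: first the set equality $\{\vk_j \mid j \notin Z\}^\perp = \linspan\{\va_j \mid j \in Z\}$, and then the containment of this common subspace in the level-set $L_{\lambda,\vu}$. The whole argument rests on the duality relation $\mA = 2\pi\mK^{-T}$, which I would first rewrite as $\mK^T\mA = 2\pi\mI_d$, i.e. $\vk_i \cdot \va_j = 2\pi\delta_{ij}$. For the set equality, note that whenever $j \in Z$ and $i \notin Z$ we necessarily have $i \neq j$, so $\vk_i \cdot \va_j = 0$; hence each $\va_j$ with $j \in Z$ is orthogonal to every $\vk_i$ with $i \notin Z$, giving the inclusion $\linspan\{\va_j \mid j \in Z\} \subseteq \{\vk_i \mid i \notin Z\}^\perp$. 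Since $\{\va_1,\ldots,\va_d\}$ is a basis of $\real^d$, the left-hand side has dimension $|Z|$, while $\{\vk_i \mid i \notin Z\}$ is a set of $d-|Z|$ linearly independent vectors, so its orthogonal complement also has dimension $|Z|$. Matching dimensions upgrades the inclusion to the claimed equality.

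For the containment in the level-set I would invoke \cref{lem:rayleigh}, which reduces the membership $\vx \in L_{\lambda,\vu}$ to verifying that $\exp[i[\mK,-\mK]^T\vx] \odot \vu$ lies in the $\lambda$-eigenspace of $\mQ(\vzero)$. Take any $\vx \in \linspan\{\va_j \mid j \in Z\}$. The phase applied to the $j$-th entry of the top block of $\vu = [\vv;\pm\vv]$ is $\exp[i\vk_j\cdot\vx]$, and to the $j$-th entry of the bottom block it is $\exp[-i\vk_j\cdot\vx]$; in both cases the entry itself is $\pm v_j$. I would then split on the index $j$. If $j \notin Z$, the orthogonality established above gives $\vk_j\cdot\vx = 0$, so the phase is $1$ and that entry is untouched. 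If $j \in Z$, then $v_j = 0$, so both corresponding entries of $\vu$ vanish and the phase is immaterial. Hence $\exp[i[\mK,-\mK]^T\vx] \odot \vu = \vu$ exactly, and $\vu$ is trivially a $\lambda$-eigenvector of $\mQ(\vzero)$; by \cref{lem:rayleigh} we conclude $\vx \in L_{\lambda,\vu}$, as desired.

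The argument is essentially bookkeeping once the duality is brought in, so there is no deep obstacle; the points that deserve explicit care are the dimension count that turns the inclusion of subspaces into an equality, and the case split on the index $j$. The key structural observation to state cleanly is that the nonzero entries of $\vu$ are exactly those indexed outside $Z$, and these are precisely the entries on which the spatial phase $\exp[\pm i\vk_j\cdot\vx]$ acts trivially for $\vx$ in the dual subspace; this is what forces the phase-shifted vector to coincide with $\vu$ rather than merely to remain in the eigenspace up to a scalar.
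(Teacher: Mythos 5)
Your proof is correct and follows essentially the same route as the paper's: show that the phase factor $\exp[i[\mK,-\mK]^T\vx]\odot\vu$ equals $\vu$ itself (trivial phases on the indices outside $Z$, zero entries on $Z$), then invoke \cref{lem:rayleigh}. The only difference is that you spell out the duality-plus-dimension-count argument for the equality $\{\vk_j \mid j\notin Z\}^\perp = \linspan\{\va_j \mid j\in Z\}$, which the paper dispatches with a one-line reference to \eqref{eq:dual}; this is a welcome elaboration, not a different approach.
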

\begin{proof}
Let $\vz \in \{ \vk_j ~|~ j \notin Z \}^\perp$, then for $j \notin Z$, $\vk_j \cdot \vz =
0$. Or in other words for $j \notin Z$ we have  $(\exp[ i \mK^T \vz ])_j
= 1$. Hence we must have that $\exp[ i \mK^T \vz ] \odot \vv = \vv$
since
\[
 (\exp[ i \mK^T \vz ])_j v_j = 
\begin{cases}
 0   &\text{for}~j\in Z ~\text{and}\\
 v_j &\text{for}~j \notin Z.
\end{cases}
\]
We conclude that  $\exp[ i [\mK,-\mK]^T \vz ] \odot [\vv; \pm
\vv] = \vu$ is still an eigenvector of $\mQ(\vzero)$ associated with
$\lambda$ and $\psi(\vz;\vu) =
\psi(\vzero;\vu) = \lambda$ by \cref{lem:rayleigh}. The expression in terms of the $\va_j$
follows from \eqref{eq:dual}.
\end{proof}

When all the entries of the eigenvector $\vu$ in the
$\lambda-$eigenspace of $Q(\vzero)$ are non-zero (i.e. all the
transducers are activated), we can guarantee that the
level-set $L_{\lambda,\vu}$ is reduced to up to $2^d$ points per
primitive cell, where the points are determined
by whether upon changing the signs of the entries of $\vu$, the new vector
remains in the $\lambda-$eigenspace of $Q(\vzero)$.
\begin{lemma}
 \label{lem:multiple}
 Let $\vu = [\vv; \pm\vv] \in H_\pm$ be a real unit norm eigenvector of
 $\mQ(\vzero)$ associated with eigenvalue $\lambda$, such that $v_j \neq
 0$  for all $j=1,\ldots,d$. Assume that the eigenspace associated with
 $\lambda$ is a subset of $H_\pm$. Consider the set\footnote{Obviously,
 the set $T_{\lambda,\vu}$ is never empty since $\{ \vzero, \vone \}
 \subset T_{\lambda,\vu}$, because the vectors $\pm \vu$ always belong
 to the same subspace.}
 \begin{equation}
 T_{\lambda,\vu} = \{ \vs \in \{0,1\}^d ~|~
 [(-1)^{\vs} \odot \vv;\pm(-1)^{\vs} \odot\vv] \in
 \lambda\!\!-\!\!\text{eigenspace of}~\mQ(\vzero)\},
 \label{eq:sett}
 \end{equation}
 where $(-1)^{\vs} \equiv [(-1)^{s_1},\ldots,(-1)^{s_d}]$.  Then
 $L_{\lambda,\vu}$ is a union of lattices given by
 \begin{equation}
  \label{eq:slattice}
  L_{\lambda,\vu} = \bigcup_{\vs \in T_{\lambda,\vu}} 
  \{ \vx(\vn;\vs) = \mA (\vn + \vs/2) ~|~
  \vn \in \znat^d \}.
 \end{equation}
\end{lemma}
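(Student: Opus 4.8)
The plan is to start from the characterization of the level-set furnished by \cref{lem:rayleigh}, namely that $\vx \in L_{\lambda,\vu}$ if and only if the phase-shifted vector $\exp[i[\mK,-\mK]^T\vx] \odot \vu$ lies in the $\lambda$-eigenspace of $\mQ(\vzero)$, and then to pin down exactly which $\vx$ produce such a vector. First I would write this phase-shifted vector out explicitly for $\vu = [\vv;\pm\vv]$: since the first $d$ phases are $\exp[i\mK^T\vx]$ and the last $d$ are $\exp[-i\mK^T\vx]$, it equals $[\exp[i\mK^T\vx]\odot\vv;\ \pm\exp[-i\mK^T\vx]\odot\vv]$.

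The crux of the argument is to show that this vector can only land in the eigenspace when each entry of $\exp[i\mK^T\vx]$ equals $\pm 1$; both hypotheses of the lemma are essential here. Because the $\lambda$-eigenspace is contained in $H_\pm$, membership in the eigenspace forces the phase-shifted vector into $H_\pm$, which for $\vu=[\vv;\pm\vv]$ amounts to $\exp[-i\mK^T\vx]\odot\vv = \exp[i\mK^T\vx]\odot\vv$, or componentwise $\exp[i\vk_j\cdot\vx]\,v_j = \exp[-i\vk_j\cdot\vx]\,v_j$ for every $j$. Since $v_j\neq 0$ by hypothesis, I cancel $v_j$ to obtain $\exp[2i\vk_j\cdot\vx]=1$, that is $\vk_j\cdot\vx\in\pi\znat$. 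Hence $\exp[i\vk_j\cdot\vx]=(-1)^{s_j}$ for some $s_j\in\{0,1\}$, and the phase-shifted vector reduces to $[(-1)^\vs\odot\vv;\ \pm(-1)^\vs\odot\vv]$, which lies in the $\lambda$-eigenspace precisely when $\vs\in T_{\lambda,\vu}$, by the definition \eqref{eq:sett}.

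It remains to translate the constraint $\mK^T\vx \in \pi\znat^d$ into the lattice language using the duality \eqref{eq:dual}. Writing $\mK^T\vx = \pi\vm$ with $\vm\in\znat^d$ and using $\mA = 2\pi\mK^{-T}$ gives $\vx = \pi\mK^{-T}\vm = \mA\vm/2$. Splitting $\vm = 2\vn + \vs$ into its even part $2\vn$ (with $\vn\in\znat^d$) and its parity $\vs\in\{0,1\}^d$ yields $\vx = \mA(\vn+\vs/2)$, and this parity $\vs$ is exactly the sign pattern identified above. Collecting the admissible $\vx$ over all $\vs\in T_{\lambda,\vu}$ and all $\vn\in\znat^d$ then produces the claimed union of lattices \eqref{eq:slattice}. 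For the converse inclusion I would run this computation backwards: if $\vs\in T_{\lambda,\vu}$ and $\vx = \mA(\vn+\vs/2)$, then $\mK^T\vx = 2\pi\vn+\pi\vs$, so $\exp[i\mK^T\vx]=(-1)^\vs$ and the phase-shifted vector equals $[(-1)^\vs\odot\vv;\ \pm(-1)^\vs\odot\vv]$, which lies in the eigenspace by definition of $T_{\lambda,\vu}$; hence $\vx\in L_{\lambda,\vu}$ by \cref{lem:rayleigh}.

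The main obstacle is the first reduction step: everything hinges on recognizing that the combination of the eigenspace being confined to $H_\pm$ and the entries $v_j$ being nonzero rigidly forces the shifted phases to be real signs $\pm 1$, rather than arbitrary unit-modulus numbers. If either hypothesis fails, the phase equations admit a continuum of solutions and the level-set acquires lines or planes instead of isolated points, which is precisely the dichotomy separating this lemma from \cref{lem:indet,lem:lines,lem:planes}.
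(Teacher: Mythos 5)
Your proof is correct and takes essentially the same approach as the paper: both reduce membership in $L_{\lambda,\vu}$ to membership of the phase-shifted vector $\exp[i[\mK,-\mK]^T\vx]\odot\vu$ in the $\lambda$-eigenspace via \cref{lem:rayleigh} and \eqref{eq:space-phase}, then use the confinement of that eigenspace to $H_\pm$ together with $v_j\neq 0$ to force each phase $\exp[i\vk_j\cdot\vx]$ to be $\pm 1$, and finally convert $\mK^T\vx\in\pi\znat^d$ into the lattice description \eqref{eq:slattice} through the duality \eqref{eq:dual}. The only cosmetic difference is in how the forcing step is executed: the paper splits the shifted vector into its $H_\pm$ (cosine) and $H_\mp$ (sine) components via \eqref{eq:hpmmp} and sets the $H_\mp$ part to zero, whereas you impose the defining equation of $H_\pm$ directly and cancel $v_j$ to obtain $\exp[2i\vk_j\cdot\vx]=1$ --- the same computation in different clothing.
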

\begin{proof}
We would like to show that $\psi(\vx;\vu) = \psi(\vzero;\vu) = \lambda$
if and only if $\vx$ belongs to one of the lattices in \eqref{eq:slattice}.

Let us first assume that $\vx$ belongs to one of the lattices
in \eqref{eq:slattice}.  Then we can find $\vs \in \{0,1\}^d$ such that
$[(-1)^{\vs}\vv; \pm (-1)^{\vs}\vv]$ is in the $\lambda-$eigenspace of
$\mQ(\vzero)$ and $\vx = \mA (\vn + \vs/2)$ for some $\vn \in \znat^d$.
By \eqref{eq:dual} we have that $\exp[i\vk_j\cdot\vx] =
\exp[i2\pi(n_j+s_j/2)] = (-1)^{s_j}$ and so we also have 
\begin{equation}
 \exp[ i[\mK,-\mK]^T \vx] = \exp[ - i[\mK,-\mK]^T \vx]  =
[(-1)^{\vs}; (-1)^{\vs}].
 \label{eq:realexp}
\end{equation}
We conclude that the vector $\exp[i[\mK,-\mK]^T \vx] \odot [\vv; \pm
\vv]$ belongs to the $\lambda-$eigenspace of $\mQ(\vzero)$. In other
words, $\vx \in L_{\lambda,\vu}$ since by \eqref{eq:space-phase} we have
\[
\psi(\vx;\vu) = \psi(\vzero;\exp[i[\mK,-\mK]^T \vx] \odot [\vv; \pm \vv]) =
\psi(\vzero;\vu) = \lambda.
\]

Now let us assume that $\vx \in L_{\vu,\lambda}$, i.e. $\psi(\vx;\vu) = \lambda$. By
using \eqref{eq:space-phase} again we have that $\exp[i[\mK,-\mK]^T \vx]
\odot [\vv; \pm \vv]$ must be in the $\lambda-$eigenspace of
$\mQ(\vzero)$.  Moreover the former vector can be split into components
in $H_\pm$ and $H_{\mp}$ as follows
\begin{equation}
\exp\Mb{i\begin{bmatrix}\mK^T\\-\mK^T\end{bmatrix} \vx}
\odot \begin{bmatrix}\vv\\\pm\vv\end{bmatrix} =
\underbrace{
\cos\Mb{\begin{bmatrix}\mK^T\\-\mK^T\end{bmatrix} \vx}
\odot \begin{bmatrix}\vv\\\pm\vv\end{bmatrix}}_{\in H_\pm}
+
i\underbrace{\sin\Mb{\begin{bmatrix}\mK^T\\-\mK^T\end{bmatrix} \vx}
\odot \begin{bmatrix}\vv\\\pm\vv\end{bmatrix}}_{\in H_\mp}.
\label{eq:hpmmp}
\end{equation}
Since we assumed that the $\lambda-$eigenspace of $\mQ(\vzero)$ is a
subspace of $H_{\pm}$ we must have that $\exp[i[\mK,-\mK]^T \vx]
\odot [\vv; \pm \vv] \in H_{\pm}$ and its $H_{\mp}$ component must be
zero. By the decomposition \eqref{eq:hpmmp}, this means that $\exp[i[\mK,-\mK]^T \vx]
\odot [\vv; \pm \vv]$ must be real. Using the definition \eqref{eq:sett}
of the set $T_{\lambda,\vu}$, we see that there must be an $\vs \in \{0,1\}^d$ such that
\eqref{eq:realexp} holds. This imposes that $\vx$ must be part of the
lattice \eqref{eq:slattice} with $\vs \in T_{\lambda,\vu}$.
\end{proof}

To better illustrate \cref{lem:multiple}, let us define the primitive cell 
\begin{equation}
C = \{ \mA \valpha ~|~ \valpha \in [0,1]^d\}.
\label{eq:primitive}
\end{equation} 
Points within the primitive cell can be identified by their ``atomic
coordinates'' $\valpha \in [0,1]^d$.  Thus we have the following extreme
cases.
 \begin{enumerate}[i.]
  \item If $\lambda$ is a simple eigenvalue then $T_{\lambda,\vu} = \{
  \vzero, \vone \}$ and there are two points per primitive cell in
  $L_{\lambda,\vu}$, namely $\vzero$ and $\vone/2$, in atomic coordinates,
  provided $\vu$ is a unit length eigenvector of $\mQ(\vzero)$ with
  non-zero entries.

  \item If $\lambda$ is an eigenvalue of multiplicity $d$, then
  $T_{\lambda,\vu} = \{ 0,1\}^d$ and there are exactly
  $2^d$ points in $L_{\lambda,\vu}$ per primitive cell, namely the points
  with atomic coordinates $\vs/2$, where $\vs \in \{0,1\}^d$. This is of
  course provided $\vu$ is a unit length eigenvector of $\mQ(\vzero)$
  with non-zero entries and the $\lambda-$eigenspace of $\mQ(\vzero)$ is
  all within either $H_+$ or $H_-$.

 \end{enumerate}
 
\begin{example}[Eigenvalue of multiplicity 2]
\label{ex:em2}

Consider a 2D example. Choose the wavevectors so that $\mK=\mI_2$ and
$\ell = 2\pi$. Setting
$\mathfrak{a}=\mathfrak{b}=1$ in the acoustic radiation potential and using \eqref{eq:Q0} we
get
\begin{equation}
\begin{aligned}
\mQ(\vzero) &= \vone\vone^T 
-\begin{bmatrix}\mK\\-\mK\end{bmatrix}
 \begin{bmatrix}\mK\\-\mK\end{bmatrix}^T
 = \begin{bmatrix}
 0 & 1 & 2 & 1 \\
 1 & 0 & 1 & 2 \\
 2 & 1 & 0 & 1 \\
 1 & 2 & 1 & 0
 \end{bmatrix}.
\end{aligned}
\end{equation}
We choose the eigendecomposition $\mQ(\vzero) = \mU \mLambda \mU^T$
with
\begin{equation}
\mU = 
\begin{bmatrix*}[r]
1/2 & -1/2 & 1/\sqrt{2} & 0 \\
1/2 & 1/2 & 0 & 1/\sqrt{2} \\
1/2 & -1/2 & -1/\sqrt{2} & 0 \\
1/2 & 1/2 & 0 & -1/\sqrt{2}
\end{bmatrix*},
\end{equation}
and $\diag(\mLambda) = \{4,0,-2,-2\}$ such that it conforms to $H_+$ and
$H_-$, as in \cref{lem:Q0}.
Notice that the $(-2)-$eigenspace of $\mQ(\vzero)$ is identical
to $H_-$. Pick a real unit length eigenvector $\vu$ with no zero entries
in this eigenspace. Clearly we must have $T_{-2,\vu} = \{0,1\}^2$ (see
\cref{lem:multiple} for the definition of this set). Hence
\cref{lem:multiple}, predicts acoustic radiation potential minima at the
union of lattices 
\begin{equation}
 \label{eq:exlattice}
  \bigcup_{ \vs \in \{0,1\}^2}
  \{\vx(\vn;\vs) = 2\pi(\vn+\vs/2) ~|~ \vn\in \znat^d\}.
\end{equation}
\Cref{fig:multiplicity} shows that the acoustic
radiation potential for this example with eigenvector $\vu =
[1,1,-1,-1]^T/2$ has 4 minimum points per primitive cell.

If we consider the acoustic radiation potential for an eigenvector with
zero entries, we no longer have strict minima, by \cref{lem:indet}. This
is illustrated in \cref{fig:tetragonal_planes}, which shows the acoustic
radiation potential for the same example, but with eigenvector $\vu =
[1,0,-1,0]^T/\sqrt{2}$.
\end{example}

\begin{figure}
\begin{center}
\includegraphics[scale=0.1]{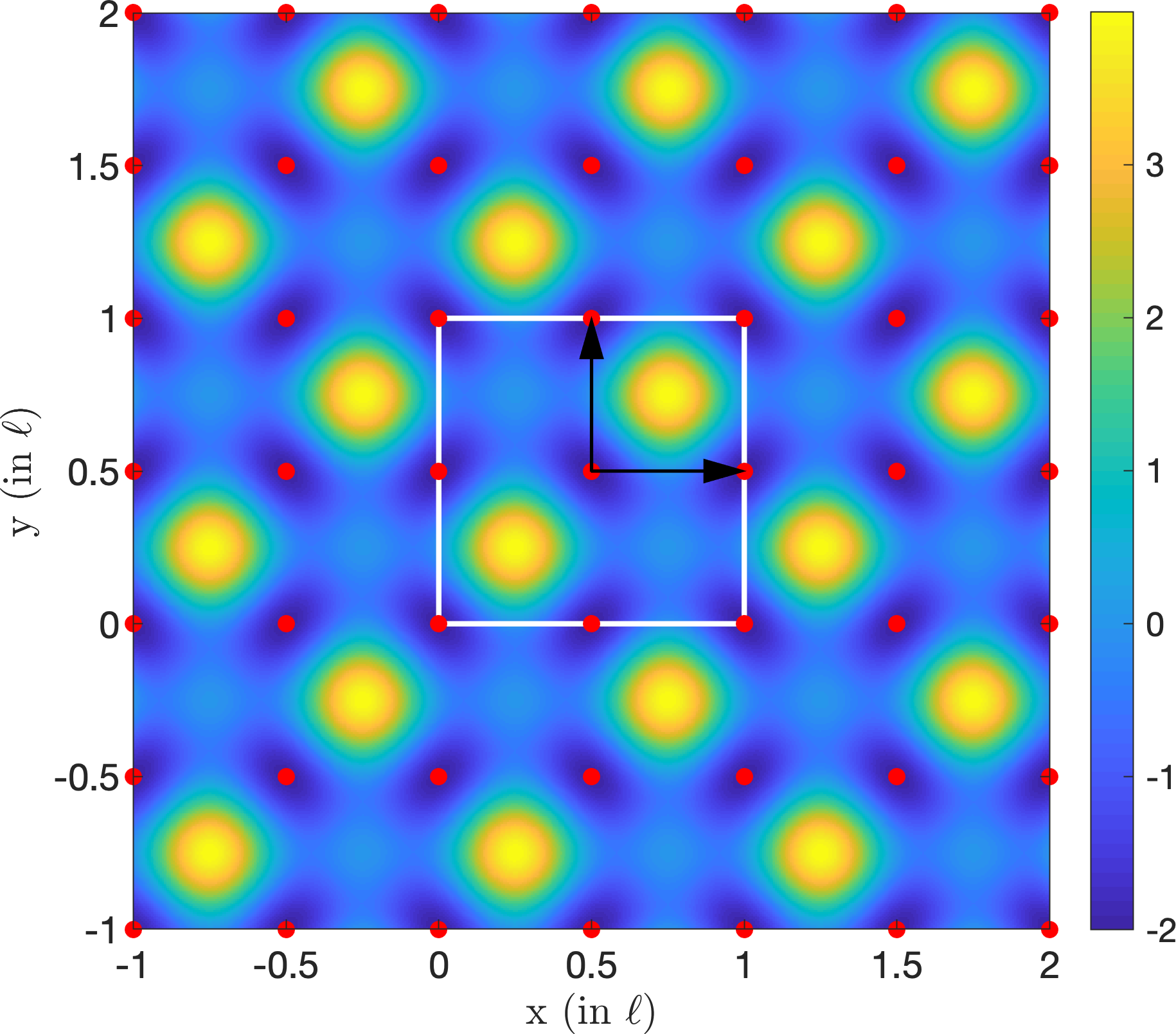}
\end{center}
\caption{Acoustic radiation potential (from \cref{ex:em2}) resulting in a tetragonal lattice
arrangement of minima when the eigenvector has no zero entries. A
primitive cell is outlined in white. The points in the lattice
\eqref{eq:exlattice} are shown in red. Since the minimum eigenvalue of
$\mQ(\vzero)$ has
multiplicity 2, there are 4 minimum points per primitive
cell. The black arrows indicate the directions normal to the
transducers.}
\label{fig:multiplicity}
\end{figure}

\begin{figure}
\begin{center}
\includegraphics[scale=0.1]{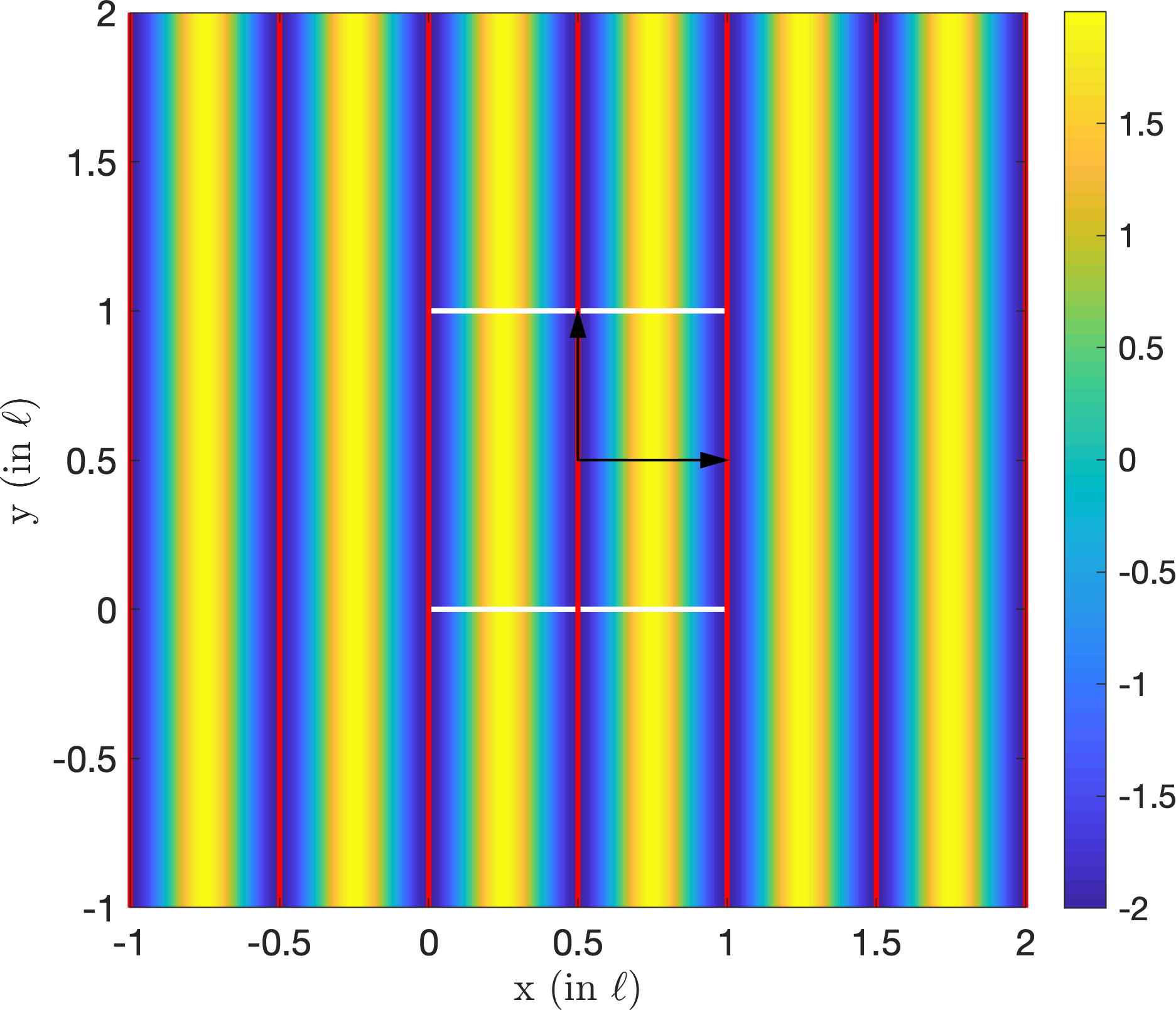}
\end{center}
\caption{The acoustic radiation potential defined in \cref{ex:em2}
results in lines of minima if the eigenvector used to compute the
acoustic radiation potential has
zero entries. The lines where minima lie are the spans of the lattice
vectors specified by \eqref{eq:span} and are indicated in red. A
primitive cell is outlined in white. The black arrows indicate the directions normal to the
transducers.}
\label{fig:tetragonal_planes}
\end{figure}

 \begin{lemma}
 \label{lem:lines}
 Let $\vu = [\vv; \pm \vv]\in H_{\pm}$ be a real unit norm eigenvector of
 $\mQ(\vzero)$ associated with eigenvalue $\lambda$.
 Consider the set $T^\pm_{\lambda,\vu}$ defined by
 \begin{equation}
  T^\pm_{\lambda,\vu} = \left\{ \vs \in \{0,1\}^d ~|~ [ (-1)^{\vs} \odot \vv ;
  \mp (-1)^{\vs}
  \odot \vv] \in \lambda\!\!-\!\!\text{eigenspace of}~\mQ(\vzero)
  \right\}.
  \label{eq:newt}
 \end{equation}
 Notice that if the set $T^\pm_{\lambda,\vu}$ is not empty, the
 $\lambda-$eigenspace of $\mQ(\vzero)$ straddles between $H_+$ and
 $H_-$, which is a possibility predicted by \cref{lem:Q0}.
 Then we can guarantee that $L_{\lambda,\vu}$ contains the lines
 \begin{equation}
  \label{eq:sline}
  \{ \vx(\vn;\theta) = \mK^{-T}(\theta (-1)^{\vs} + 2\pi\vn) ~|~ \theta \in
  \real\}~\text{for}~ \vn \in \znat^d~\text{and}~\vs \in
  T^\pm_{\lambda,\vu}.
 \end{equation}
 \end{lemma}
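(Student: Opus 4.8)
The plan is to reduce everything to \cref{lem:rayleigh}, which tells us that $\vx \in L_{\lambda,\vu}$ exactly when the phase-shifted vector $\exp[i[\mK,-\mK]^T\vx]\odot\vu$ lies in the $\lambda$-eigenspace of $\mQ(\vzero)$. So for the candidate points $\vx(\vn;\theta) = \mK^{-T}(\theta(-1)^{\vs} + 2\pi\vn)$ from \eqref{eq:sline}, I only need to verify this membership for every $\theta \in \real$, after which the claim that $L_{\lambda,\vu}$ contains the full line follows immediately.

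First I would compute the relevant phase. Since $\mK^T\mK^{-T} = \mI_d$, we get $\mK^T\vx(\vn;\theta) = \theta(-1)^{\vs} + 2\pi\vn$, so the $2\pi\vn$ part contributes only trivial phases and $(\exp[i\mK^T\vx])_j = \exp[i\theta(-1)^{s_j}]$. Expanding componentwise by Euler's formula, $\exp[i\theta(-1)^{s_j}] = \cos\theta + i(-1)^{s_j}\sin\theta$, I would then show that the phase-shifted eigenvector splits cleanly into its real and imaginary parts as
\[
\exp[i[\mK,-\mK]^T\vx(\vn;\theta)]\odot\vu = \cos\theta \begin{bmatrix}\vv\\\pm\vv\end{bmatrix} + i\sin\theta\begin{bmatrix}(-1)^{\vs}\odot\vv\\\mp(-1)^{\vs}\odot\vv\end{bmatrix},
\]
where the $\pm/\mp$ blocks follow from carrying the sign of the bottom half of $\vu = [\vv;\pm\vv]$ through the conjugated phase $\exp[-i\mK^T\vx]$.

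The key step is then to recognize the two summands. The first is simply $\cos\theta\,\vu$, which lies in the $\lambda$-eigenspace by hypothesis. The second is $\sin\theta$ times exactly the vector appearing in the definition \eqref{eq:newt} of $T^\pm_{\lambda,\vu}$; since we chose $\vs \in T^\pm_{\lambda,\vu}$, that vector also lies in the $\lambda$-eigenspace. Because $\mQ(\vzero)$ is real symmetric, its $\lambda$-eigenspace is a complex subspace of $\complex^{2d}$ and hence closed under the complex combination $\cos\theta\,(\cdot) + i\sin\theta\,(\cdot)$, so the phase-shifted vector remains in the eigenspace for every $\theta$. Applying \cref{lem:rayleigh} yields $\vx(\vn;\theta) \in L_{\lambda,\vu}$; the $\vn$-periodicity is automatic from $\exp[i2\pi\vn]=1$, and letting $\theta$ range over $\real$ sweeps out the full line in direction $\mK^{-T}(-1)^{\vs}$ through the lattice point $\mA\vn$.

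I expect the only real subtlety, rather than a genuine obstacle, to be the $\pm/\mp$ sign bookkeeping needed to match the imaginary part to the straddling partner $[(-1)^{\vs}\odot\vv;\mp(-1)^{\vs}\odot\vv]$ of \eqref{eq:newt} rather than to its $H_\pm$ counterpart from \cref{lem:multiple}. The conceptual content is precisely that the flip $\pm\mapsto\mp$ between the top and bottom blocks, encoded in the nonemptiness of $T^\pm_{\lambda,\vu}$, supplies a second, imaginary-direction eigenvector; it is this extra eigenvector that upgrades the isolated half-integer lattice points produced in \cref{lem:multiple} into a one-parameter family, i.e.\ a line.
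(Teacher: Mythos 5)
Your proposal is correct and follows essentially the same route as the paper: the same computation of $\mK^T\vx(\vn;\theta)$, the same decomposition $\cos\theta\,[\vv;\pm\vv] + i\sin\theta\,[(-1)^{\vs}\odot\vv;\mp(-1)^{\vs}\odot\vv]$, and the same conclusion via membership of both summands in the $\lambda$-eigenspace followed by \eqref{eq:space-phase} (equivalently \cref{lem:rayleigh}). Your explicit remark that the eigenspace of the real symmetric matrix $\mQ(\vzero)$ is a complex subspace, hence closed under the combination $\cos\theta\,(\cdot)+i\sin\theta\,(\cdot)$, just spells out a step the paper leaves implicit.
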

 \begin{proof}
 Assume that $\vx$ belongs to one of the lines \eqref{eq:sline}, we
 would like to show that $\psi(\vx;\vu) = \psi(\vzero;\vu) = \lambda$.
 Then $\mK^T\vx = \theta (-1)^{\vs} + 2\pi\vn$, and we have that
 $\exp[i\vk_j\cdot \vx] = \exp[i\theta (-1)^{s_j}]$. 
 The following decomposition in terms of a vector in $H_\pm$ and in
 $H_\mp$ holds
 \begin{equation}
\exp\left[i\begin{bmatrix}
\mK^T \\
-\mK^T
\end{bmatrix}\vx\right]
\odot
\begin{bmatrix}
\vv \\
\pm \vv
\end{bmatrix}
= \cos\theta
\begin{bmatrix}
\vv \\
\pm \vv
\end{bmatrix}
+ i\sin\theta
\begin{bmatrix}
(-1)^{\vs} \\
(-1)^{\vs}
\end{bmatrix}
\odot
\begin{bmatrix}
\vv \\
\mp \vv
\end{bmatrix}.
 \end{equation}
By the definition of the set $T^\pm_{\lambda,\vu}$, the vector $[(-1)^{\vs} \odot \vv;
\mp (-1)^{\vs} \odot \vv]$ must be a $\lambda-$eigenvector of
$\mQ(\vzero)$. Hence $\exp[i[\mK^T; -\mK^T]\vx]\odot [\vv; \pm\vv]$ is
also a $\lambda-$eigenvector of $\mQ(\vzero)$. Again by
\eqref{eq:space-phase}, we readily get that $\psi(\vx;\vu) = \lambda$.
 \end{proof}
 
 \begin{example}[Lines of minima]
 \label{ex:lom}
 Consider a 2D example. Choose the wavevectors so $\mK=\mI_2$,
 $\ell=2\pi$ and set
 $\mathfrak{a}=1, \mathfrak{b}=0$ in the acoustic radiation potential. By \eqref{eq:Q0}, we
 have
 $\mQ(\vzero) = \vone\vone^T$. We choose the eigendecomposition $\mQ(\vzero)
 = \mU \mLambda \mU^T$ with $\diag(\mLambda) =  \{4,
 0, 0, 0\}$ and 
 \begin{equation}
 \mU = \frac{1}{\sqrt{2}}
 \begin{bmatrix*}[r]
 1/\sqrt{2} & -1/\sqrt{2} & 1 & 0 \\
 1/\sqrt{2} & 1/\sqrt{2} & 0 & 1 \\
 1/\sqrt{2} & -1/\sqrt{2} & -1 & 0 \\
 1/\sqrt{2} & 1/\sqrt{2} & 0 & -1
 \end{bmatrix*}.
 \end{equation}
 This eigendecomposition conforms with $H_+$ and $H_-$ and is consistent
 with \cref{lem:Q0}.  Notice that the $0-$eigenspace of $\mQ(\vzero)$
 straddles over $H_+$ and $H_-$, and that this eigenspace contains all
 of $H_-$. The real unit length vector $\vu = [-1,1,-1,1]^T/2$ (the second
 column of $\mU$) belongs to both $H_+$ and the $0-$eigenspace of
 $\mQ(\vzero)$. Using definition \eqref{eq:newt} we can verify that
 $T^\pm_{0,\vu} = \{0,1\}^2$. Thus \cref{lem:lines} predicts
 acoustic radiation potential minima along the families of lines
\begin{equation}
\label{eq:exline}
\{ \vx(\vn;\theta) = \mK^{-T}(\theta (-1)^{\vs} + 2\pi\vn) ~|~ \theta \in
\real \},~\text{for}~\vn \in \znat^d~\text{and}~\vs\in\{0,1\}^2.
\end{equation}
These lines are indicated in \cref{fig:lines} and
coincide with minima of the acoustic radiation potential.
\end{example}

\begin{figure}
\begin{center}
\includegraphics[scale=0.1]{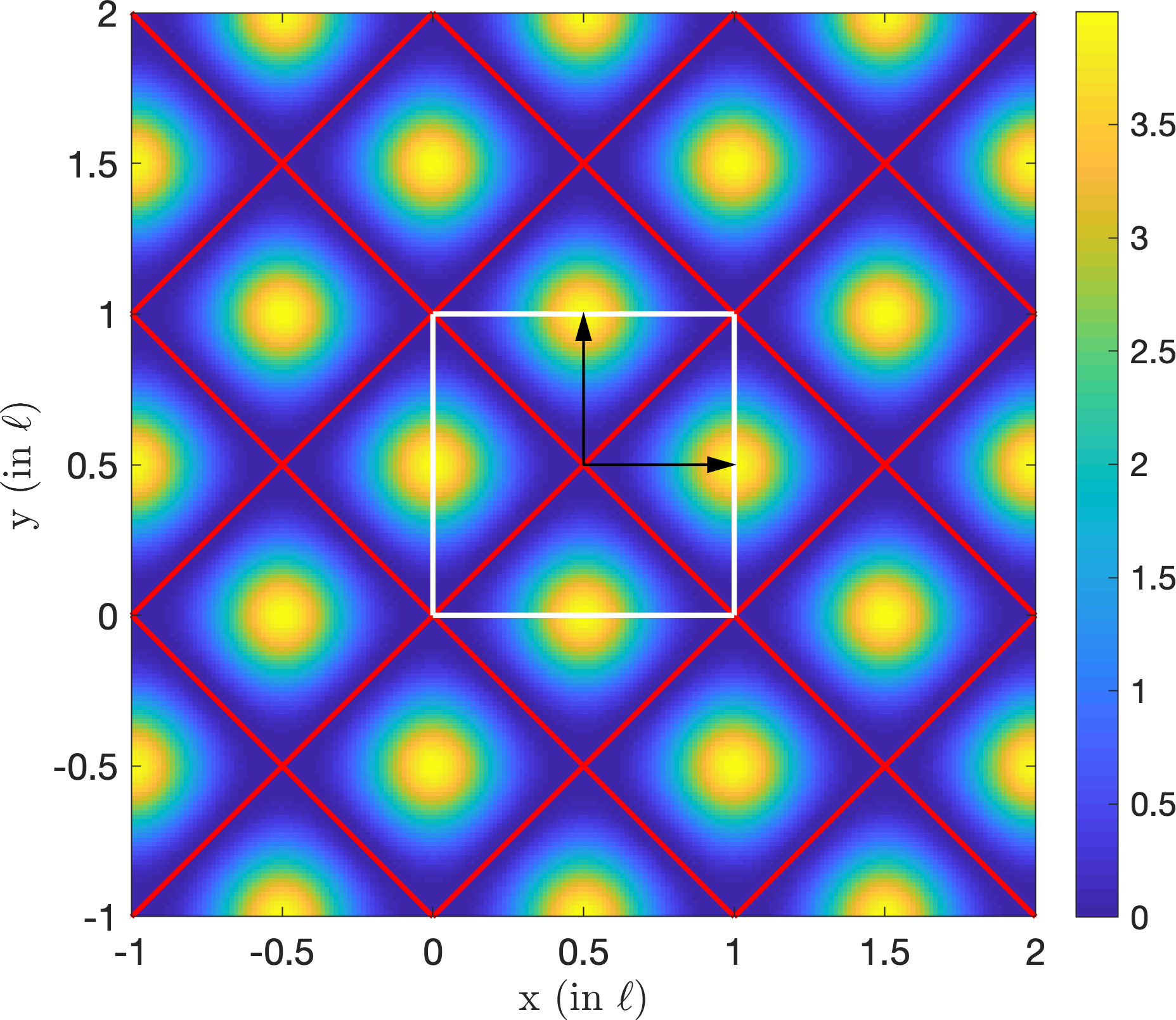}
\end{center}
\caption{Acoustic radiation potential of \cref{ex:lom} resulting in
lines of minima. A primitive cell is outlined in white. The lines
predicted by \eqref{eq:exline} are shown in red. The black arrows
indicate the directions normal to the transducers.} \label{fig:lines}
\end{figure}

\begin{lemma}
\label{lem:planes}
Let $\vu = [\vv; \pm \vv] \in H_{\pm}$ be a real unit norm eigenvector of $\mQ(\vzero)$ associated with eigenvalue $\lambda$. Consider the set $R_{\lambda,\vu}^{\pm}$ defined by
\begin{equation}
\begin{aligned}
R_{\lambda,\vu}^{\pm} = \bigg\{(\vs, \vr) \in (\{0,1\}^d)^2 ~\bigg|~ 
&
\begin{bmatrix}
(-1)^{\vs} \\
(-1)^{\vs}
\end{bmatrix}
\odot
\begin{bmatrix}
\vv \\
\mp\vv
\end{bmatrix},
\begin{bmatrix}
(-1)^{\vr} \\
(-1)^{\vr}
\end{bmatrix}
\odot
\begin{bmatrix}
\vv \\
\mp \vv
\end{bmatrix},
 \\
&\begin{bmatrix}
(-1)^{\vs+\vr} \\
(-1)^{\vs+\vr}
\end{bmatrix}
\odot
\begin{bmatrix}
\vv \\
\pm\vv
\end{bmatrix}
\in \lambda\!\!-\!\!\text{eigenspace of}~\mQ(\vzero)
\bigg\}.
\end{aligned}
\label{eq:setr}
\end{equation}
Notice that if the set $R_{\lambda,\vu}^{\pm}$ is not empty, the
$\lambda$-eigenspace of $\mQ(\vzero)$ straddles between $H_+$ and $H_-$.
Then we can guarantee $L_{\lambda,\vu}$ contains the sets
\begin{equation}
\{\vx(\vn;\theta,\phi) = \mK^{-T}(\theta (-1)^{\vs}+\phi (-1)^{\vr} +
2\pi\vn) ~|~ \theta, \phi\in \real\},
\label{eq:planes}
\end{equation}
for $\vn\in \znat^d$ and $(\vs,\vr)\in R^\pm_{\lambda,\vu}$. The sets in
\eqref{eq:planes} are guaranteed to be planes when the vectors
$(-1)^{\vr}$ and $(-1)^{\vs}$ are linearly independent or equivalently
$(-1)^{\vr+\vs} \notin \{-\vone,\vone\}$.
\end{lemma}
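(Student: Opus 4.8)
The plan is to mimic the structure of the proof of \cref{lem:lines}, but now allowing the phase along $\vx$ to vary in \emph{two} independent directions, $(-1)^{\vs}$ and $(-1)^{\vr}$, rather than one. First I would fix $(\vs,\vr)\in R^\pm_{\lambda,\vu}$ and a point $\vx$ of the form \eqref{eq:planes}, so that $\mK^T\vx = \theta(-1)^{\vs}+\phi(-1)^{\vr}+2\pi\vn$. The goal is to show $\psi(\vx;\vu)=\lambda$, which by \eqref{eq:space-phase} and \cref{lem:rayleigh} reduces to showing that the phase-modulated vector $\exp[i[\mK,-\mK]^T\vx]\odot[\vv;\pm\vv]$ lies in the $\lambda$-eigenspace of $\mQ(\vzero)$. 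Componentwise we have $\exp[i\vk_j\cdot\vx]=\exp[i(\theta(-1)^{s_j}+\phi(-1)^{r_j})]$, so the $2\pi\vn$ term drops out and I may set $\vn=\vzero$ without loss of generality.

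The key computation is to expand $\exp[i\theta(-1)^{\vs}]\odot\exp[i\phi(-1)^{\vr}]$ using $e^{i\theta(\pm1)}=\cos\theta\pm i(-1)^{\cdots}\sin\theta$, i.e. to split each exponential into its cosine and sine parts as in \eqref{eq:hpmmp} and the decomposition inside the proof of \cref{lem:lines}. Multiplying the two one-parameter expansions out gives four terms, carrying the sign patterns $\vzero$, $\vs$, $\vr$, and $\vs+\vr$ respectively, with scalar coefficients $\cos\theta\cos\phi$, $i\sin\theta\cos\phi$, $i\cos\theta\sin\phi$, and $-\sin\theta\sin\phi$. Applying this to $[\vv;\pm\vv]$ and tracking how the sign flips act on the two halves, the term with pattern $\vzero$ stays in $H_\pm$ on $[\vv;\pm\vv]$, the two single-parameter terms land on $[(-1)^{\vs}\odot\vv;\mp(-1)^{\vs}\odot\vv]$ and $[(-1)^{\vr}\odot\vv;\mp(-1)^{\vr}\odot\vv]$ (the $H_\mp$ flip coming from the $i\sin$ factor, exactly as in \cref{lem:lines}), and the product term lands on $[(-1)^{\vs+\vr}\odot\vv;\pm(-1)^{\vs+\vr}\odot\vv]$. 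These are precisely the three vectors (together with the trivial $[\vv;\pm\vv]$, which is the starting eigenvector) required by the membership condition in \eqref{eq:setr}. Since $R^\pm_{\lambda,\vu}$ was defined so that all of these belong to the $\lambda$-eigenspace, the whole linear combination does too, and hence $\psi(\vx;\vu)=\lambda$.

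Finally I would address the claim that \eqref{eq:planes} is genuinely a plane. The set is the image under the fixed invertible matrix $\mK^{-T}$ of $\{\theta(-1)^{\vs}+\phi(-1)^{\vr}\mid\theta,\phi\in\real\}$, which is a two-dimensional subspace exactly when $(-1)^{\vs}$ and $(-1)^{\vr}$ are linearly independent; since each is a $\pm1$ vector, dependence forces $(-1)^{\vr}=\pm(-1)^{\vs}$, i.e. $(-1)^{\vr+\vs}\in\{-\vone,\vone\}$, giving the stated equivalence. I expect the main obstacle to be purely bookkeeping: correctly tracking which of the four cross-terms picks up the $H_\mp$ sign flip versus staying in $H_\pm$, and confirming that the sign pattern $\vs+\vr$ pairs with $[\vv;\pm\vv]$ (not $[\vv;\mp\vv]$) because two sign flips compose back to the original half. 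Getting those parities right is what makes the three conditions in \eqref{eq:setr} line up with the four expansion terms, and it is the only place where an error could creep in.
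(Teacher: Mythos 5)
Your proposal is correct and follows essentially the same argument as the paper: expand the phase factor $\exp[i[\mK,-\mK]^T\vx]\odot[\vv;\pm\vv]$ via products of Euler formulas into the four terms with sign patterns $\vzero$, $\vs$, $\vr$, $\vs+\vr$ (the paper groups them as real part in $H_\pm$ and imaginary part in $H_\mp$, which is the same decomposition), then invoke the definition of $R^\pm_{\lambda,\vu}$ so every term is a $\lambda$-eigenvector and conclude via \eqref{eq:space-phase}. Your parity bookkeeping is right, and your explicit justification of when the sets are genuine planes (linear dependence of $\pm 1$ vectors forces $(-1)^{\vr+\vs}\in\{-\vone,\vone\}$) is a small addition the paper leaves implicit.
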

\begin{proof}
Assume $\vx$ belongs to one of the planes \eqref{eq:planes}, we would
like to show that $\psi(\vx;\vu) = \psi(\vzero;\vu) = \lambda$ .  Since
$\mK^T\vx = \theta (-1)^{\vs} + \phi (-1)^{\vr} + 2\pi \vn$, we must have
that $\exp[i\vk_j\cdot \vx] = \exp[i(\theta (-1)^{s_j} + \phi
(-1)^{r_j})]$. Then we obtain the following decomposition in terms of
vectors in $H_{\pm}$ and $H_{\mp}$,
\begin{equation}
\begin{aligned}
\exp\left[i
\begin{bmatrix}
\mK^T \\
-\mK^T
\end{bmatrix}\vx\right]
\odot
\begin{bmatrix}
\vv \\ 
\pm\vv
\end{bmatrix}
= &\left(
\cos\theta\cos\phi
\begin{bmatrix}
 \vone\\ \vone
\end{bmatrix}
-\sin\theta\sin\phi
\begin{bmatrix}
(-1)^{\vs+\vr} \\
(-1)^{\vs+\vr}
\end{bmatrix}
\right)
\odot
\begin{bmatrix}
\vv \\
\pm \vv
\end{bmatrix} \\
 + i&\left(
 \sin\theta\cos\phi
 \begin{bmatrix}
 (-1)^{\vs} \\
 (-1)^{\vs}
 \end{bmatrix}
 + \cos\theta\sin\phi
 \begin{bmatrix}
 (-1)^{\vr} \\
 (-1)^{\vr}
 \end{bmatrix}
 \right)
 \odot
 \begin{bmatrix}
 \vv \\
 \mp \vv
 \end{bmatrix}.
\end{aligned}
\end{equation}
By definition of the set $R_{\lambda,\vu}^{\pm}$, each vector in the
decomposition must be a $\lambda-$eigenvector of $\mQ(\vzero)$. Hence
$\exp[i[\mK^T;-\mK^T]\vx]\odot [\vv; \pm\vv]$ is also a
$\lambda$-eigenvector of $\mQ(\vzero)$. By \eqref{eq:space-phase} this
implies that $\psi(\vx;\vu) = \lambda$. 
\end{proof}

\begin{example}[Planes of minima]
\label{ex:planes}
Consider a 3D example. Choose the wavevectors so $\mK = \mI_3$. Setting
$\mathfrak{a}=1, \mathfrak{b}=0$ in the acoustic radiation potential gives $\mQ(\vzero) =
\vone\vone^T$. An eigendecomposition $\mQ(\vzero) = \mU \mLambda \mU^T$ is
chosen such that  $\diag(\mLambda) = \{6,0,0,0,0,0\}$ and
\begin{equation}
\mU = \frac{1}{\sqrt{2}}
\begin{bmatrix*}[r]
1/\sqrt{3} & 1/\sqrt{2} & 1/\sqrt{6} & 1 & 0 & 0 \\
1/\sqrt{3} & -1/\sqrt{2} & 1/\sqrt{6} & 0 & 1 & 0 \\
1/\sqrt{3} & 0 & -2/\sqrt{6} & 0 & 0 & 1 \\
1/\sqrt{3} & 1/\sqrt{2} & 1/\sqrt{6} & -1 & 0 & 0 \\
1/\sqrt{3} & -1/\sqrt{2} & 1/\sqrt{6} & 0 & -1 & 0 \\
1/\sqrt{3} & 0 & -2/\sqrt{6} & 0 & 0 & -1
\end{bmatrix*},
\label{eq:qplane}
\end{equation}
conforming to $H_+$ and $H_-$ (as in \cref{lem:Q0}).
The $0-$eigenspace of $\mQ(\vzero)$ straddles over $H_+$ and $H_-$, and
contains all of $H_-$. The real unit length vector $\vu =
[1,-1,0,1,-1,0]^T/2$ (the second column of $\mU$) belongs to both $H_+$ and the $0-$eigenspace of $\mQ(\vzero)$. Using definition \eqref{eq:setr} we can verify that 
\begin{equation}
R_{0,\vu}^{\pm} = \bigg\{ (\vs,\vr) \in \{0,1\}^3 \times \{0,1\}^3 ~|~ (-1)^{s_1+r_1} =
(-1)^{s_2+r_2} \bigg\}.
\end{equation}
Thus \cref{lem:planes} predicts acoustic radiation potential minima
along the sets given in \cref{eq:planes}, some of them being the planes 
indicated in \cref{fig:arp_planes}. We have verified numerically that
they coincide with the minima of the acoustic radiation potential.
\end{example}

\begin{figure}
\begin{center}
\begin{tabular}{cc}
\includegraphics[scale=0.1]{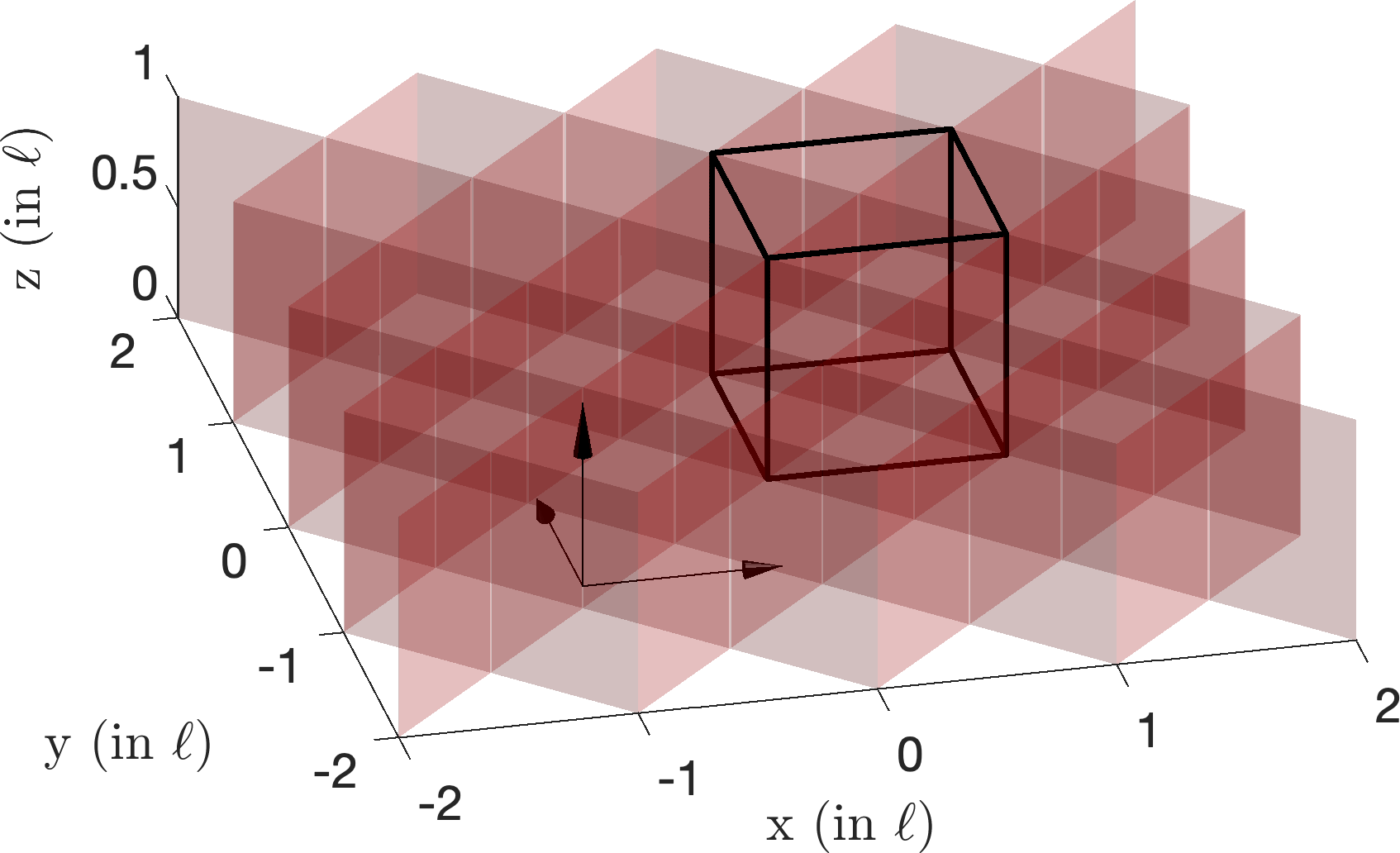}
&
\includegraphics[scale=0.1]{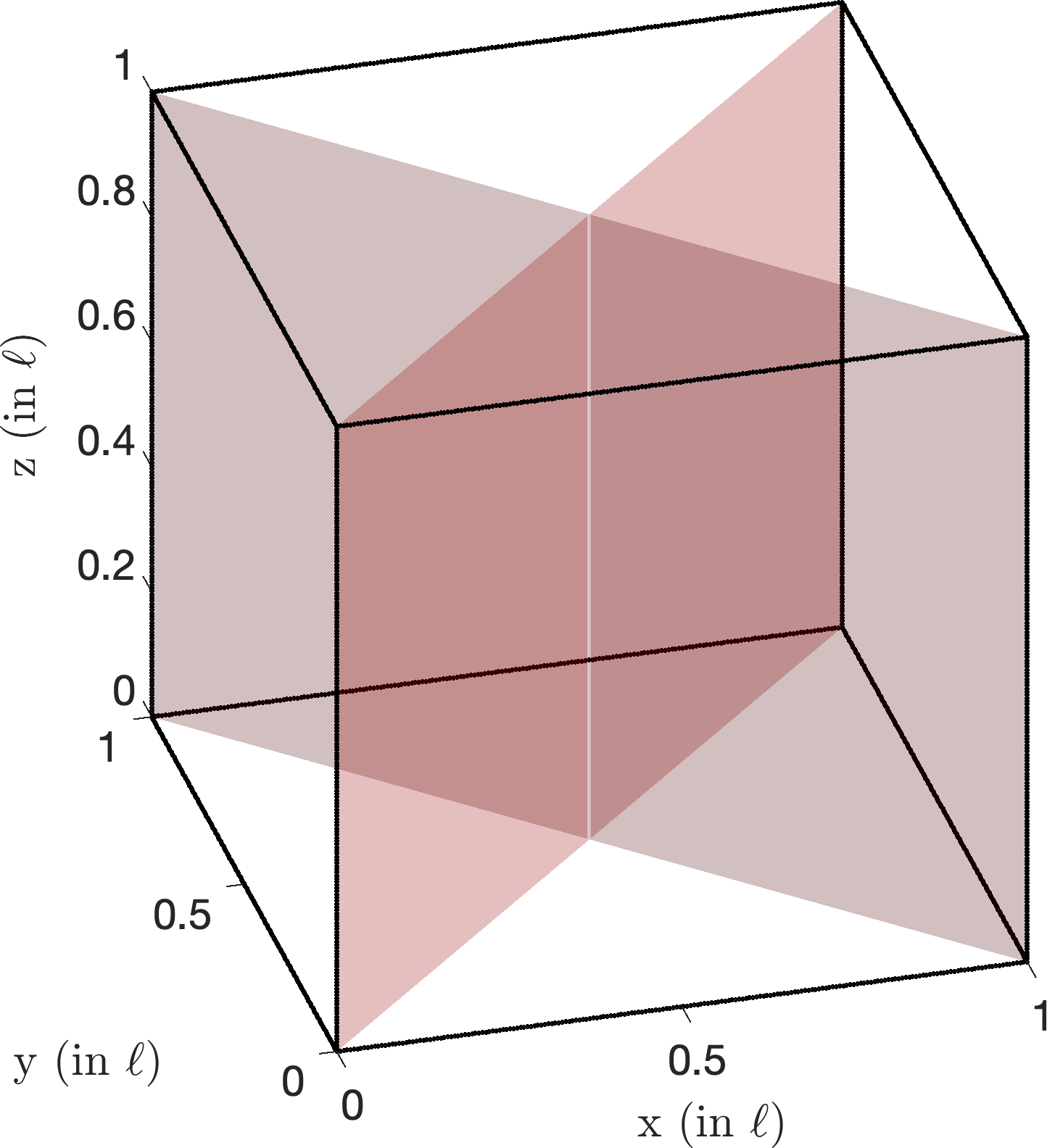}
\end{tabular}
\end{center}
\caption{The minima of the acoustic radiation potential defined in
\cref{ex:planes} appear on planes. The planes are displayed on a few
unit cells (left) and for more clarity on a unit cell (right). The black
arrows indicate the directions normal to the transducers.}
\label{fig:arp_planes}
\end{figure}

\begin{example}
\label{ex:3dlines}
Using the same matrix in \eqref{eq:qplane} but choosing the unit-length
$0-$eigenvector $\vv = [1,1,-2,1,1,-2] /2\sqrt{3} \in H_+$ we get
\begin{equation}
 R_{0,\vv}^{\pm} = \bigg\{ (\vs,\vr) \in \{0,1\}^3 \times \{0,1\}^3 ~|~
 (-1)^{s_1+r_1} = (-1)^{s_2+r_2} = (-1)^{s_3+r_3} \bigg\}.
\end{equation}
Now all the corresponding sets in \eqref{eq:planes} are lines because we
have that $(-1)^{\vs+\vr} \in \{-\vone,\vone\}$ for $(\vs,\vr)
\in R_{0,\vv}^{\pm}$. These lines are displayed in \cref{fig:3dlines}.
\end{example}

\begin{figure}
\begin{center}
\begin{tabular}{cc}
\includegraphics[scale=0.1]{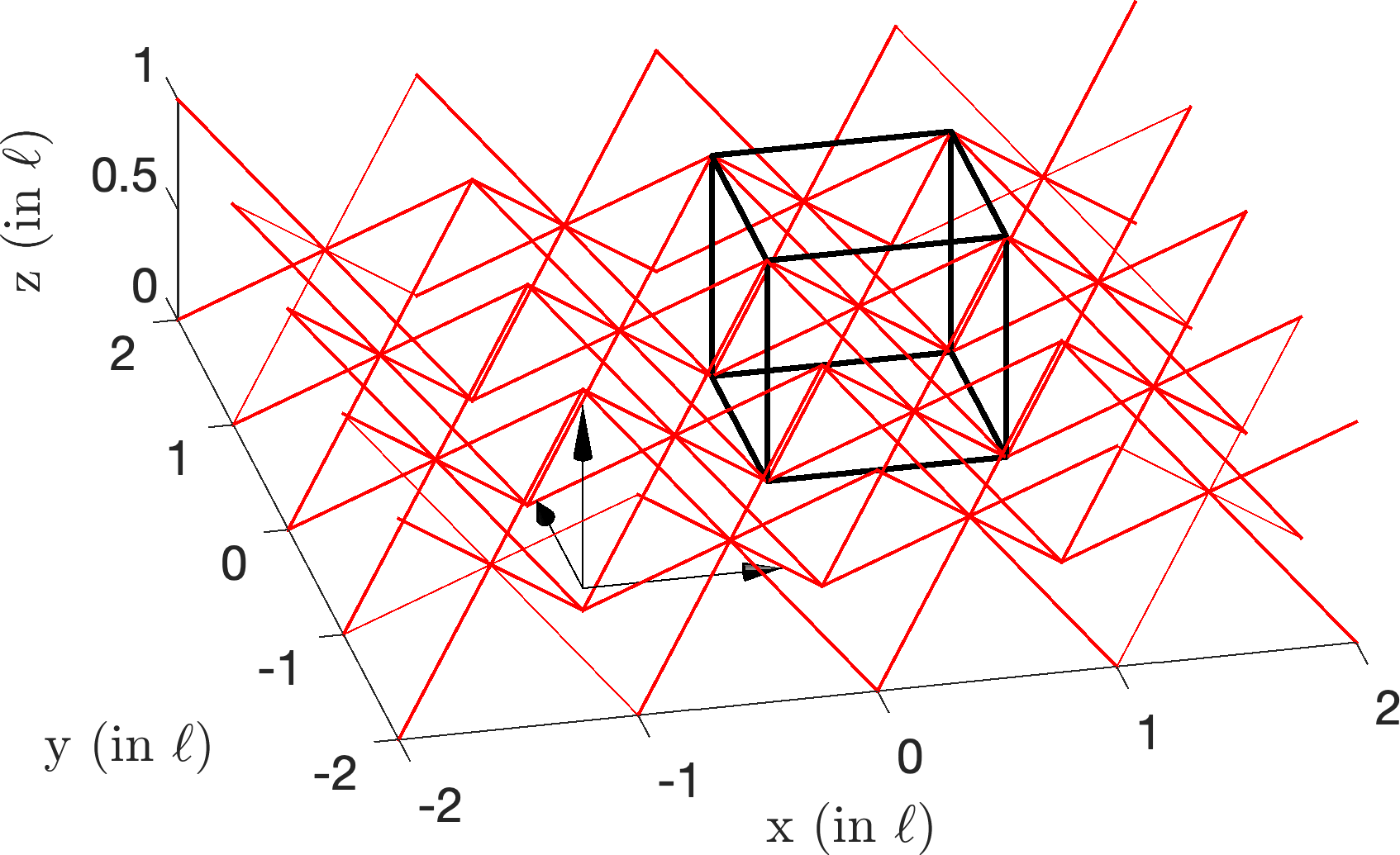}
&
\includegraphics[scale=0.1]{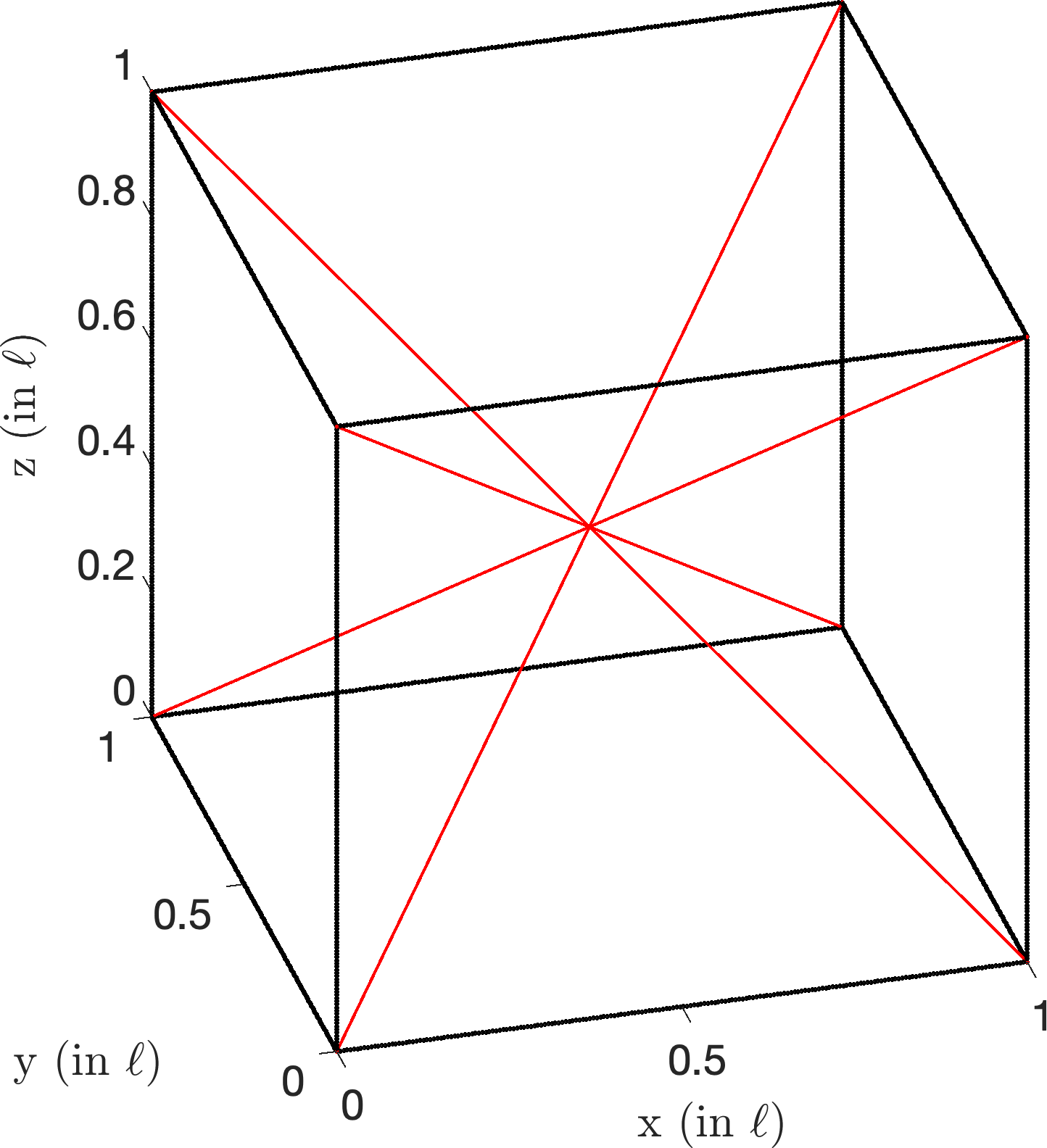}
\end{tabular}
\end{center}
\caption{The minima of the acoustic radiation potential in
\cref{ex:3dlines} appear on lines.  The lines are displayed on a few
unit cells (left) and for more clarity on a unit cell (right). The black
arrows indicate the directions normal to the transducers.}
\label{fig:3dlines}
\end{figure}

\section{Achievable Bravais lattice classes}
\label{sec:lims}
In earlier sections, we have taken the wavevectors $\vk_1, \ldots ,
\vk_d$ to be fixed. Now, we address the question of how to choose these
vectors in order to obtain an arrangement of particles on a particular
Bravais lattice. The position of the particles is dictated by the
acoustic radiation potential $\psi(\vx;\vu)$ which is $\mA-$periodic.
Since $\mA = 2\pi \mK^{-T}$, and all the columns of $\mK$ have the same
length $k$, there are limitations to the possible Bravais lattices that we
can obtain. To explore these limitations, we introduce the following
definition.
\begin{definition}
\label{def:achievable}
We say that a Bravais lattice class is {\em achievable} if out of all class
members with reciprocal lattice vectors $\vk_1,\ldots,\vk_d$ having same
length, i.e.
\begin{equation}
 |\vk_1| = \cdots = |\vk_d|
\end{equation}
there is at least one member that does not belong to any of the other Bravais
lattice classes. 
\end{definition}
In other words, a Bravais
lattice class {\em cannot be achieved} if under the constraint of
\cref{def:achievable}, all Bravais
lattices within a class happen to belong to another class.  For a two
dimensional example, notice that tetragonal lattices are members of the
orthorhombic lattice class. However, when we place the same length constraint
on the reciprocal lattice vectors for an orthorhombic lattice, we end
up with a tetragonal lattice. Thus we say that the orthorhombic lattice
class cannot be achieved. We remark that \cref{def:achievable} is
irrespective of the particular particle arrangement inside a primitive
cell, which could include isolated points, lines or planes
(\cref{thm:summary}).

Our results are summarized in \cref{tab:bravais2d,tab:bravais3d} for two
and three dimensions, respectively. We found that out of the 5 Bravais
lattice classes  in two dimensions (see e.g.  \cite{Kittel:2005:ISP})
only three are achievable. In three dimensions, there are 14 Bravais
lattice classes (see e.g.  \cite{Kittel:2005:ISP}) but we found that
only 6 are achievable.

To generate \cref{tab:bravais2d,tab:bravais3d}, we took known reference
tables associating Bravais lattice classes to the typical form of their
reciprocal vectors. Then we imposed the condition that all the
reciprocal vectors have the same norm\footnote{For certain lattices we
used reciprocal vectors with norm different from one, in an effort to get
simpler expressions.}. A general form for the reciprocal vectors is given in the
second column of these tables. If the third column has an entry, then
the particular class cannot be achieved and the class that is implied by
the same norm reciprocal vector constraint is indicated. If the third column has
no entry, it means that at least one representative belonging
exclusively to the class can be achieved. Of course,
\cref{tab:bravais2d,tab:bravais3d} can be used to design lattices by
taking the transducer normal orientations to be those in the second
column.  The reference tables we based our study on can be found in
\cite[Table 3.3]{Bradley:2010:MTSS} for three dimensions. The two
dimensional reciprocal vectors can be easily derived from e.g.
\cite[Fig. 1.7]{Kittel:2005:ISP}. We also illustrate in
\cref{fig:bravais2d,fig:bravais3d} representatives of the classes of
Bravais lattices that are achievable using standing acoustic waves, in
the particular case of isolated particle arrangements.  

\begin{table}
\begin{center}
 \begin{tabular}{|c|c|c|}
 \hline
 Bravais lattice class & Reciprocal lattice vectors & Implied symmetry\\
 \hline
 \multirow{2}{10em}{\makecell{Monoclinic}} & $\vg_1 = (1,-\cot \gamma)$ & \\
 & $\vg_2 = (0, \csc \gamma)$ & Orthorhombic centred\\
 \hline
\multirow{2}{10em}{\makecell{Orthorhombic}} & $\vg_1 = (1,0)$ & \\
& $\vg_2 = (0,1)$ & Tetragonal\\
 \hline
 \multirow{2}{10em}{\makecell{Orthorhombic centred}} & $\vg_1 = (\csc(\gamma)\sin(\gamma/2),\csc(\gamma/2)/2)$ & \\
 & $\vg_2 = (\csc(\gamma)\sin(\gamma/2),-\csc(\gamma/2)/2)$ &\\
 \hline
\multirow{2}{10em}{\makecell{Hexagonal}} & $\vg_1 = (1,1/\sqrt{3})$ & \\
& $\vg_2 = (0,2/\sqrt{3})$ &\\
 \hline
\multirow{2}{10em}{\makecell{Tetragonal}} & $\vg_1 = (1,0)$ & \\
& $\vg_2 = (0,1)$ &\\
 \hline
 \end{tabular}
 \end{center}
 \caption{Two dimensional Bravais lattice classes that are achievable using
 standing acoustic waves. The reciprocal lattice vectors we give satisfy
 $|\vg_1| = |\vg_2|$, so they need to be rescaled so that their length is
 $k$ in order to be interpreted as the wavevectors needed to obtain a particular
 Bravais lattice. The angle $\gamma$ is the angle between the
 two primitive vectors in the lattice.}
 \label{tab:bravais2d}
\end{table}

\begin{figure}
\begin{center}
\begin{tabular}{cc}
Orthorhombic centred & Hexagonal\\
\includegraphics[scale=0.09]{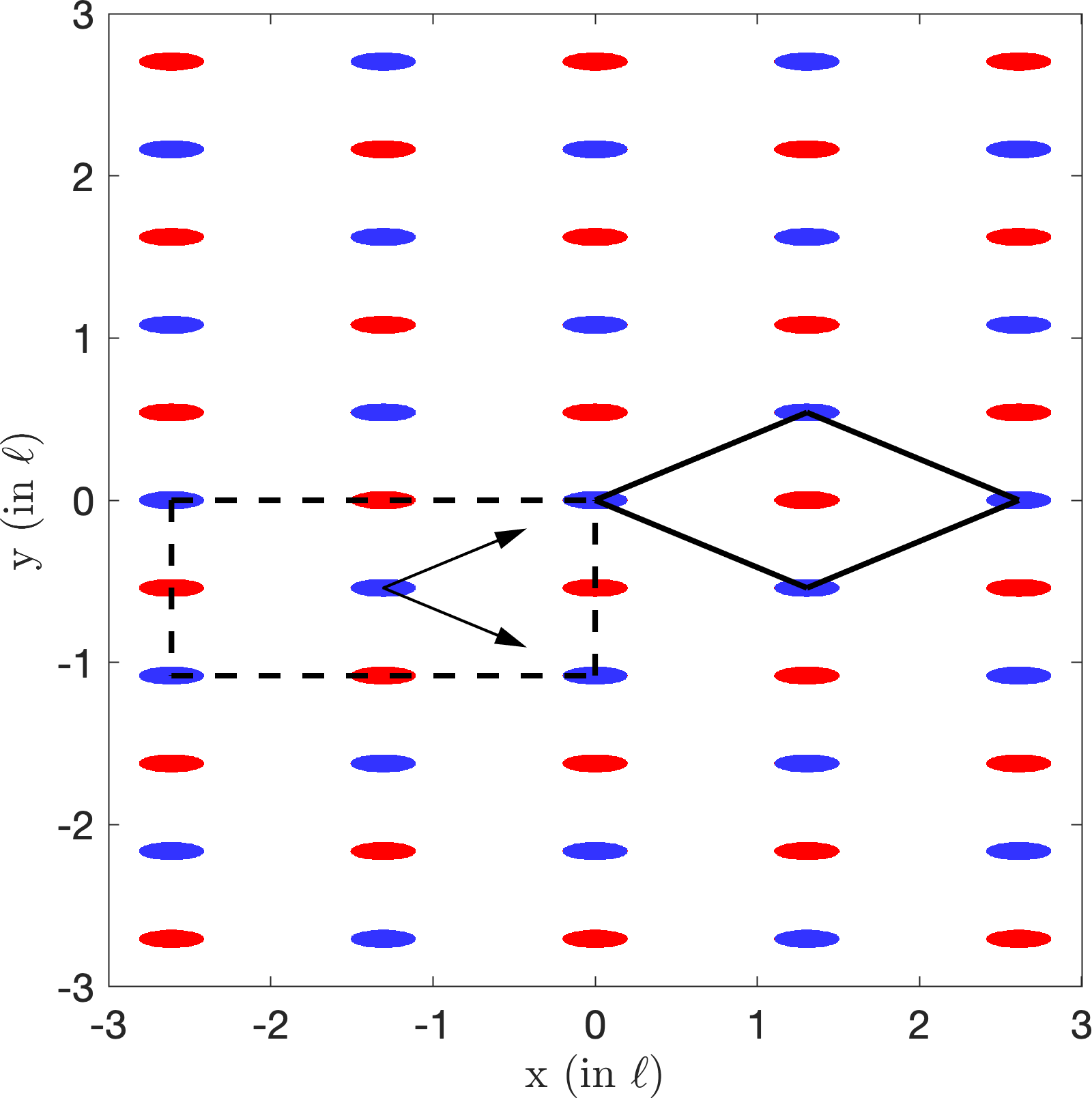} &
\includegraphics[scale=0.09]{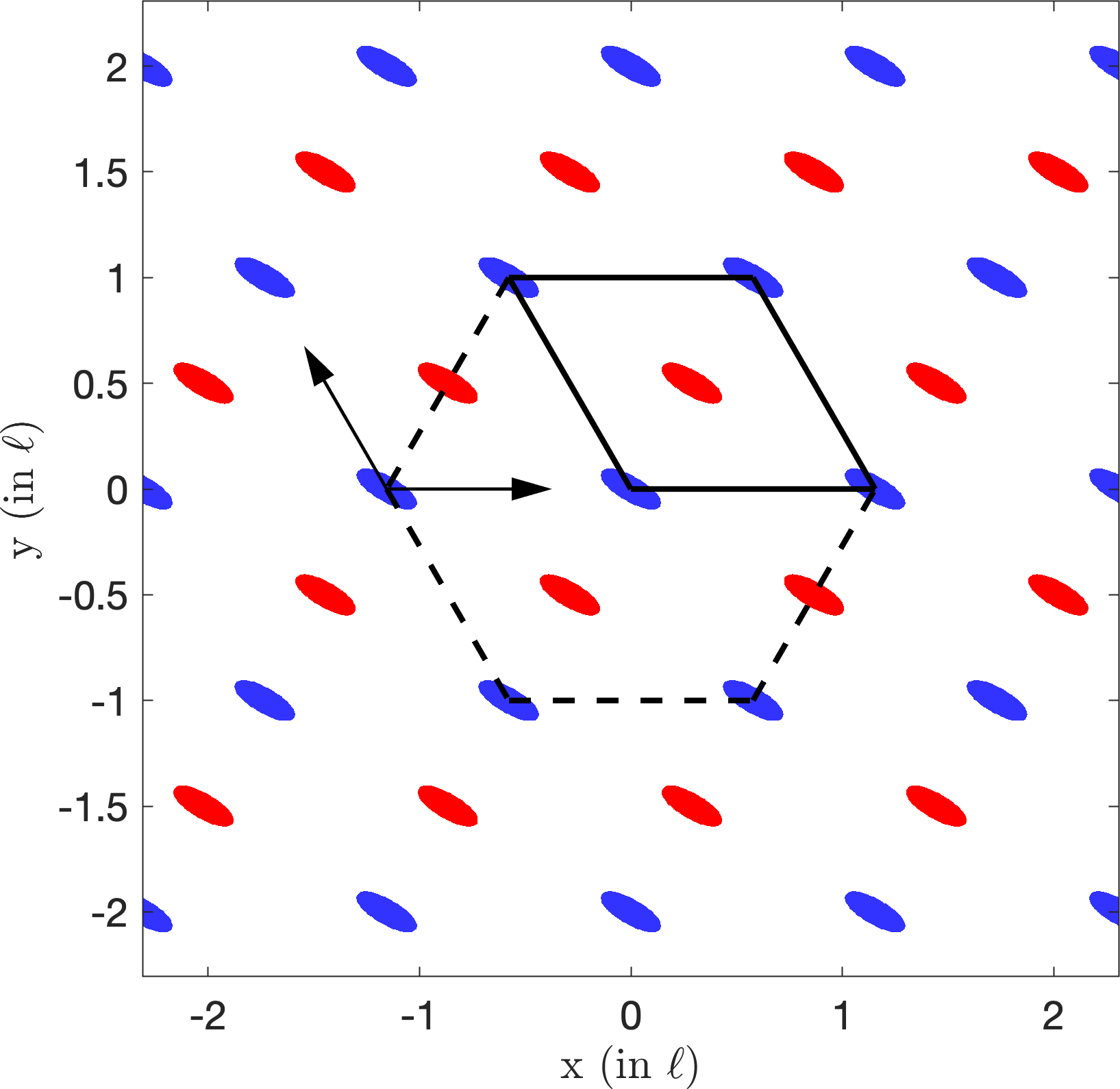}\\
\includegraphics[scale=0.09]{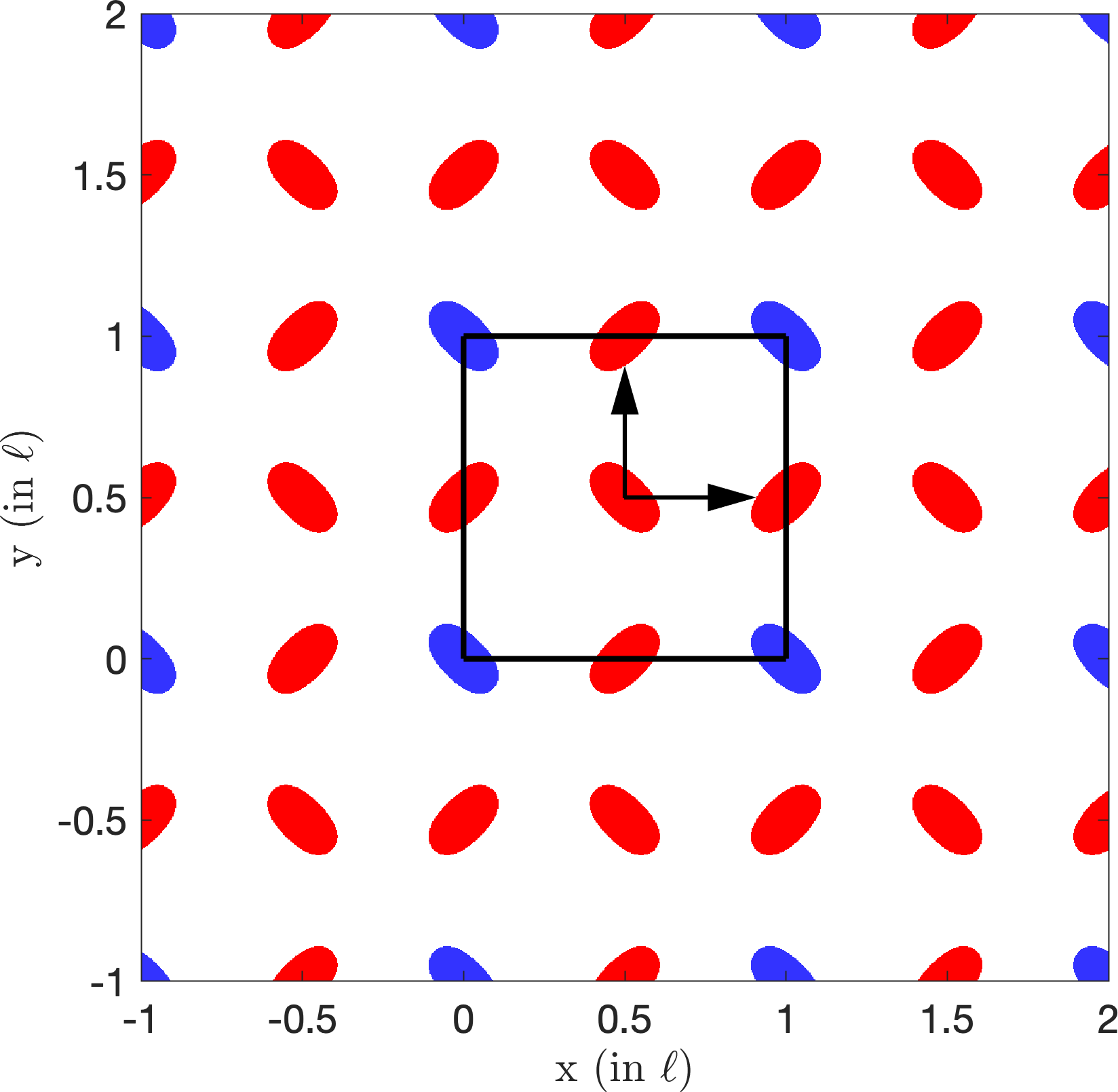} \\
Tetragonal & \\
\end{tabular}
\end{center}
\caption{Representatives of the three Bravais lattice classes that
are achievable in two dimensions. The classes are: orthorhombic centred
($\gamma = \pi/4$), hexagonal, and tetragonal. A primitive cell and unit
cell are shown in black (note the primitive cell and the unit cell are
identical for the tetragonal case). The coloured regions represent areas
where the acoustic radiation potential is less than $\lambda_{\min} +
0.1(\lambda_{\max}-\lambda_{\min})$ and $\lambda_{\min}$
(resp. $\lambda_{\max}$)
are the minimum (resp. maximum) eigenvalues of $\mQ(\vzero)$. The
blue regions are the expected locations of minima due to the prescribed
minimum location (the origin). The red regions are other minima that
appear in the process. In all figures, the acoustic radiation potential
parameters are $\mathfrak{a}=\mathfrak{b}=1$. The black arrows indicate
the directions normal to the transducer surfaces.}
\label{fig:bravais2d}
\end{figure}

%
%
\begin{table}
\begin{center}
\begin{tabular}{|c|c|c|}
 \hline
Bravais lattice class & Reciprocal lattice vectors & Implied symmetry \\
 \hline
 Triclinic primitive & $|\vg_1| = |\vg_2| = |\vg_3|$ & \\
 \hline
 \multirow{3}{10em}{} & $\vg_1 = (-\cos \gamma, -\sin \gamma, 0)$  & Cubic primitive (if $\cos \gamma = 0$) \\
 \makecell{Monoclinic primitive} & $\vg_2 = (1,0,0)$ & Tetragonal body-centred \\
 & $\vg_3 = (0,0,1)$ & (if $\cos\gamma \ne 0$) \\
 \hline
 \multirow{3}{4em}{} & $\vg_1 = (-\cot \gamma, -1,0)$ & \\
 \makecell{Monoclinic base-centred} & $\vg_2 = \frac{1}{ac}(c\csc\gamma,0,-a)$ & Tetragonal body-centred \\
 & $\vg_3 = \frac{1}{ac}(c\csc\gamma,0,a)$ & \\
 \hline
 \multirow{3}{4em}{} & $\vg_1 = (0,-1,0)$ & \\
 \makecell{Orthorhombic primitive} & $\vg_2 = (1,0,0)$ & Cubic primitive \\
 & $\vg_3 = (0,0,1)$ &\\
 \hline
 \multirow{3}{4em}{} & $\vg_1 = (b,-a,0)$ & Tetragonal body-centred \\
 \makecell{Orthorhombic base-centred} & $\vg_2 = (b,a,0)$ & (if $a\ne b$) \\
 & $\vg_3 = (0,0,\sqrt{a^2+b^2})$ & Cubic primitive (if $a=b$) \\
 \hline
 \multirow{3}{4em}{} & $\vg_1 = (1,0,1)$ & \\
 \makecell{Orthorhombic body-centred} & $\vg_2 = (0,-1,1)$ & Cubic body-centred \\
 & $\vg_3 = (1,-1,0)$ &\\
 \hline
 \multirow{3}{4em}{} & $\vg_1 = (1/a, 1/b, 1/c)$ & \\
 \makecell{Orthorhombic face-centred} & $\vg_2 = (-1/a, -1/b, 1/c)$ & \\
 & $\vg_3 = (1/a, -1/b, -1/c)$ &\\
 \hline
 \multirow{3}{4em}{} & $\vg_1 = (1,0,0)$ & \\
 \makecell{Tetragonal primitive} & $\vg_2 = (0,1,0)$ & Cubic primitive \\
 & $\vg_3 = (0,0,1)$ &\\
 \hline
 \multirow{3}{4em}{} & $\vg_1 = (0,1,1)$ & \\
 \makecell{Tetragonal body-centred} & $\vg_2 = (1,0,1)$ & Cubic body-centred\\
 & $\vg_3 = (1,1,0)$ &\\
 \hline
 \multirow{3}{4em}{} & $\vg_1 = (0, -2/(3a), 1/(3c))$ & \\
 \makecell{Trigonal primitive} & $\vg_2 = (1/(\sqrt{3}a), 1/(3a), 1/(3c))$ &  \\
 & $\vg_3 = (-1/(\sqrt{3}a), 1/(3a), 1/(3c))$ &\\
 \hline
 \multirow{3}{4em}{} & $\vg_1 = (1/(\sqrt{3}),-1,0)$ & \\
 \makecell{Hexagonal primitive} & $\vg_2 = (2/(\sqrt{3}),0,0)$ & Tegragonal body-centred \\
 & $\vg_3 = (0,0,2/(\sqrt{3}))$ &\\
 \hline
 \multirow{3}{4em}{} & $\vg_1 = (1,0,0)$ & \\
 \makecell{Cubic primitive} & $\vg_2 = (0,1,0)$ & \\
 & $\vg_3 = (0,0,1)$ &\\
 \hline
 \multirow{3}{4em}{} & $\vg_1 = (-1,1,1)$ & \\
 \makecell{Cubic face-centred} & $\vg_2 = (1,-1,1)$ & \\
 & $\vg_3 = (1,1,-1)$ &\\
 \hline
 \multirow{3}{4em}{} & $\vg_1 = (0,1,1)$ & \\
 \makecell{Cubic body-centred} & $\vg_2 = (1,0,1)$ & \\
 & $\vg_3 = (1,1,0)$ &\\
 \hline
 \end{tabular}
 \end{center}
 \caption{Three dimensional Bravais lattice classes that are achievable
 using standing acoustic waves. The parameters $a,b$, and $c$ are the
 lengths of the sides of the conventional unit cell for the Bravais
 lattice when they can be adjusted. The reciprocal lattice vectors we
 give satisfy $|\vg_1| = |\vg_2| = |\vg_3|$ and need to be rescaled so
 that they have length $k$ in order for them to be the wavevectors
 needed to realize a particular Bravais lattice. The angle $\gamma$ is the
 angle between two of the sides of the conventional unit cell.}
 \label{tab:bravais3d}
\end{table}

\hspace{-3cm}

\begin{figure}
\begin{center}
\begin{tabular}{ccc}
\includegraphics[scale=0.14]{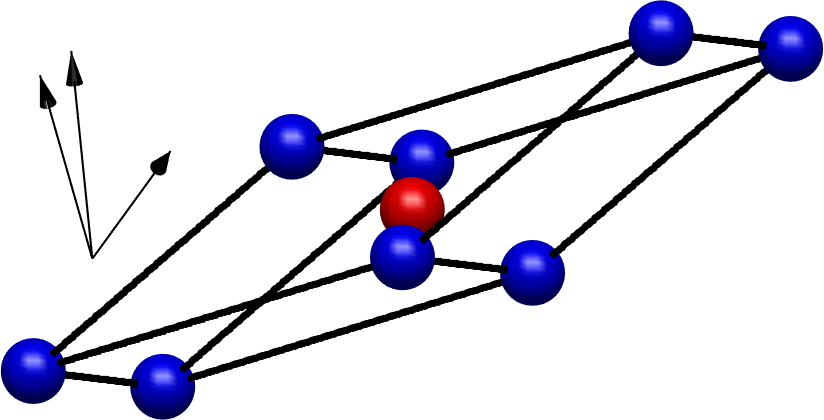} &
\includegraphics[scale=0.15]{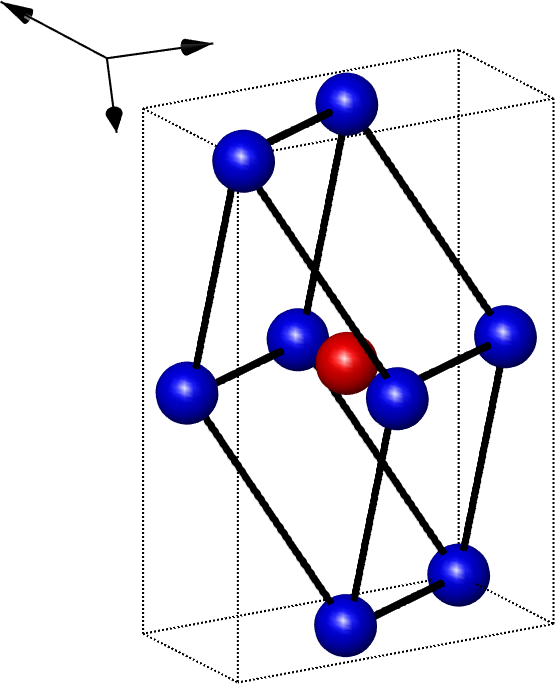} &
\includegraphics[scale=0.1]{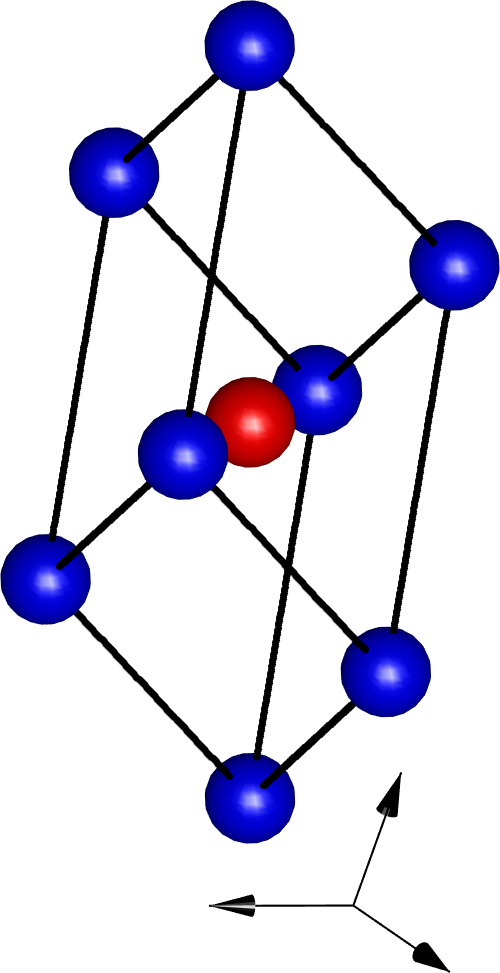} \\
Triclinic primitive & Orthorhombic face-centred & Trigonal primitive \\

\includegraphics[scale=0.1]{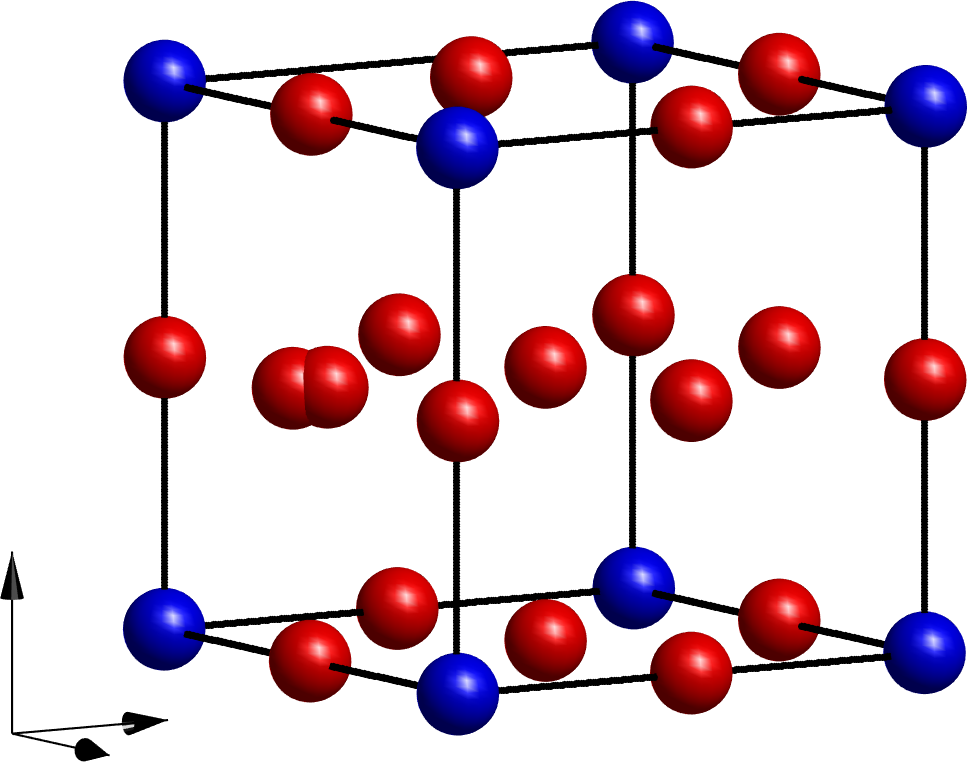} &
\includegraphics[scale=0.1]{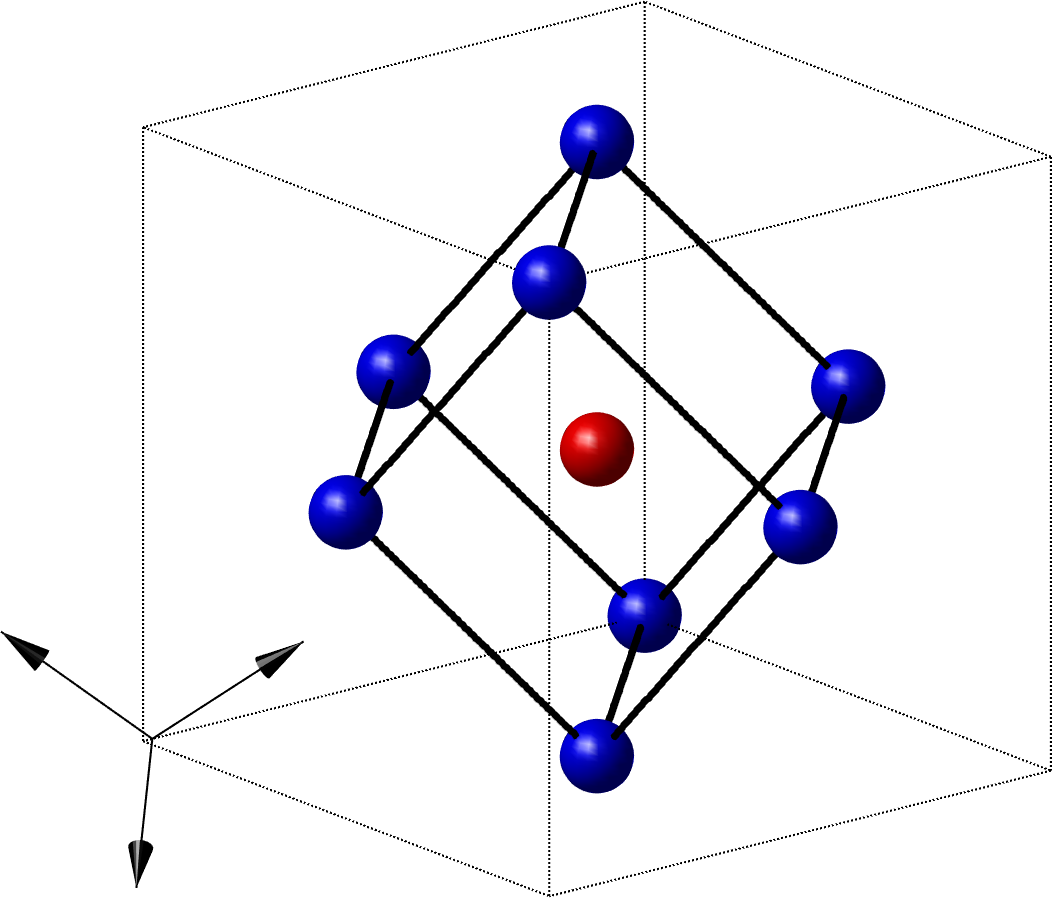} &
\includegraphics[scale=0.12]{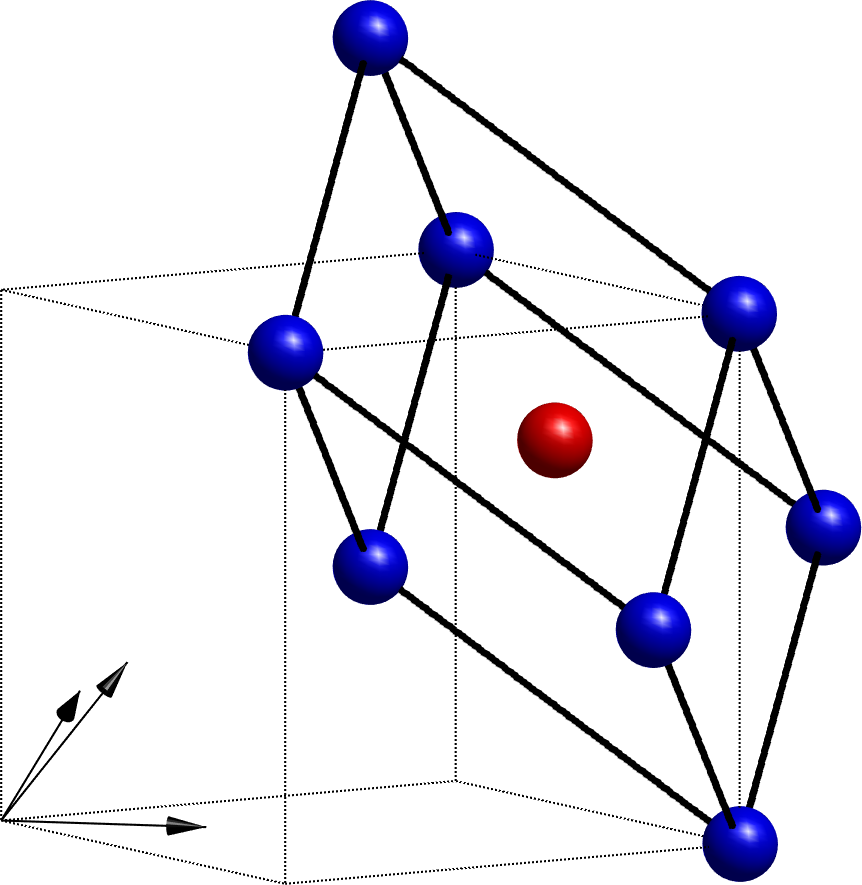} \\
Cubic primitive & Cubic face-centred & Cubic body-centred
\end{tabular}
\end{center}
\caption{Representatives of the six achievable 3D Bravais lattice
classes. The classes are: triclinic primitive ($\vg_1 = (1,2,7), \vg_2 =
(8,3,5), \vg_3 = (1,3,5)$), orthorhombic face-centred ($a=1, b=2, c=3$),
trigonal primitive ($a=1, c=2$), cubic
primitive, cubic face-centred, and cubic body-centred. The primitive
cell is shown in black. The blue regions are the expected locations of
minima due to the prescribed minimum location (the origin). The red
regions are other minima that appear in the process. In all figures, the
acoustic radiation potential parameters are $\mathfrak{a}=\mathfrak{b}=1$.
The black arrows indicate the directions normal to the transducer
surfaces.}
\label{fig:bravais3d}
\end{figure}

\section{Summary and perspectives}
\label{sec:summary}
We have shown that the behaviour of a periodic acoustic radiation
potential in two or three dimensions can be characterized by a $4\times
4$ or $6 \times 6$ real symmetric matrix, whose eigendecomposition can be
found explicitly. Our main result is to use symmetries of the corresponding
eigenspaces to predict whether the global minima of the acoustic radiation
potential are limited to points, lines or planes. It is still an open
question whether the global minima for periodic acoustic radiation
potentials can be more general curves or surfaces. We also identify
classes of Bravais lattices that can be achieved using a periodic
acoustic radiation potential. We are currently working on extending this
work to quasi-periodic arrangements of particles, which correspond to the
case where we use a number of transducer directions that is larger than
the dimension $d$. Notice that the acoustic radiation potential
derivation assumes the particles are spherical. Thus it is also interesting
to see how the particle shape influences the possible periodic
arrangements of particles. We also plan to characterize the scattering
off gratings for millimetre waves that can be fabricated using
ultrasound directed self-assembly.


\bibliographystyle{abbrv}
\bibliography{herglotz_bib}

\end{document}